\newcommand{\properideal}{%
\mathrel{\ooalign{$\lneq$\cr\raise.22ex\hbox{$\lhd$}\cr}}}
\newcommand{\properring}{%
\mathrel{\ooalign{$\gneq$\cr\raise.22ex\hbox{$\rhd$}\cr}}}
\newcommand{\Z}{{\mathbb Z}}
\newtheorem{theorem}{Theorem}[section]
\newtheorem{corollary}{Corollary}[theorem]
\newtheorem{lemma}[theorem]{Lemma}
\newtheorem{proposition}[theorem]{Proposition}
\newtheorem{definition}[theorem]{Definition}
\newtheorem{example}[theorem]{Example}
\newtheorem{problem}[theorem]{Problem}
\newtheorem{remark}{Remark}[theorem]
\numberwithin{equation}{section}
\theoremstyle{definition}
\numberwithin{equation}{subsubsection}
\title[Root Clusters over Number fields : Inverse Problems and Applications]{Root Clusters over Number fields: Inverse Problems and Applications}
\author{Shubham Jaiswal}
\address{Department of Mathematics IIT Bombay, Powai, Mumbai 400 076, India.}
\email{sjaiswal@math.iitb.ac.in} 
\subjclass[2020]{11R04, 11R21, 11R32, 12F05, 12F10, 20B35, 20F16}
\date{February 27, 2026.}
\begin{document}

\begin{abstract}
 We develop the theory of root clusters further in this article and give some applications. We introduce some new notions as well as recall earlier notions for field extensions over a perfect base field: root cluster size, its generalization root capacity, its dual
notion ascending index and its generalization intersection indicium, and generalization of degree of extension, compositum indicium. We establish our results on the Inverse problems
for these generalized notions over number fields which generalizes our earlier results. We give a field theoretic formulation for the concept of minimal generating sets of splitting fields of polynomials which was introduced by the author and Vanchinathan. We present new results as well as generalizations of our earlier results on the cardinalities of
minimal generating sets for extensions over number fields. We generalize a result of Drungilas et al. by establishing that a certain family of triplets is compositum feasible over any number field and we also list all the irreducible triplets in this family. We also prove a partial case of a conjecture of Drungilas et al. Our methods for all these problems are Galois theoretic in nature and heavily
rely on the known cases of the inverse Galois problem.

\end{abstract}

\maketitle

\section{Introduction}
The theory of root clusters was substantially developed by the author and Bhagwat in their work in \cite{Bhagwat_2025} which builds on previous work by Perlis in \cite{perlis2004roots} and Krithika and Vanchinathan in \cite{krithika2023root}. This article is yet another contribution in enriching the theory further by means of systematic generalizations to more expansive frameworks of root capacity and intersection indicium and compositum indicium that provide metrics that allow
for a more nuanced measurement of how roots of irreducible polynomials are distributed within various field extensions. We also apply similar methods to resolve certain problems on minimal generating sets and compositum feasible triplets. 

\subsection{Inverse Problems in Root Cluster Theory}

Let $K$ be a perfect field. We fix an algebraic closure $\bar{K}$ once and for all, and work with finite extensions of $K$ contained in $\bar K$. Let $L/K$ be a degree $n$ extension and let $\tilde{L}$ be its Galois closure inside $\bar{K}$. Let $G = {{\rm Gal}}(\tilde{L}/K)$ and $H= {{\rm Gal}}(\tilde{L}/L)$. We have the notion of cluster size of $L/K$, $r_K(L)$ which is $[N_G(H):H]$ (See Section 2.1 in \cite{Bhagwat_2025} for basic properties of cluster size of field extension). From Section 3.2 in \cite{Bhagwat_2025}, number of clusters of $L/K$, $s_K(L)$ is $[G:N_G(H)]$ which is also the number of distinct fields inside $\bar{K}$ isomorphic to $L$ over $K$. The following is referred to as the Inverse cluster size problem for number fields in \cite{Bhagwat_2025}.

\begin{theorem}[Theorem 3.1.1, \cite{Bhagwat_2025}]
    \label{n,r} 
   Let $K$ be a number field. Let $n>2$ and $r|n$. Then there exists an extension $L/K$ of degree $n$ with cluster size $r$. 
\end{theorem}

In Section \ref{unique inter ext}, we prove some interesting properties of unique intermediate extensions for given extensions. This notion was introduced in Section 7 of \cite{Bhagwat_2025}. The concepts of strong cluster magnification and root capacity were introduced by the author and Bhagwat in Sections 4 and 6 of \cite{Bhagwat_2025} respectively. 

\begin{definition}[Definition 6.2.1 \cite{Bhagwat_2025}]
    \label{root capacity}
Let $L/K$ be an extension. By primitive element theorem $L=K(\alpha)$ for some $\alpha\in\bar{K}$. Let $f$ be minimal polynomial of $\alpha$ over $K$. \smallskip

For an extension $M/K$, root capacity of $M$ with respect to $L$ (with base field $K$ fixed) $\rho_K(M,L)$ is the number of roots of $f$ that are contained in $M$. (This is well defined by Proposition 6.2.2 in \cite{Bhagwat_2025}).  \smallskip

Equivalently by Proposition 6.2.6 (1) in \cite{Bhagwat_2025}, $\rho_K(M,L)=a\ r_K(L)$ where $a$ is number of distinct fields inside $M$ isomorphic to $L$ over $K$.
\end{definition}

In Section \ref{Root Capacity and Cluster Towers}, we establish the Root Capacity Magnification result, Proposition \ref{root capacity mag} which is a generalization of Cluster Magnification Theorem proved in \cite{krithika2023root}. The concept of cluster towers was introduced in \cite{krithika2023root}. We give a field theoretic formulation for cluster towers and prove Proposition \ref{SCM cluster tower} about strong cluster magnification and cluster towers. In Section \ref{Inverse Root Capacity Problem}, we establish the following as a generalization of Theorem \ref{n,r}. 

\begin{theorem}  
\label{inv root cap}

(Inverse Root Capacity Problem for Number Fields) Let $K$ be a number field. Given $(n,r,\rho)$ where $n>2$ and $r|n$ and $r|\rho$ and $\rho\leq n$ and $\rho\neq n-1$. There exist extensions $L/K$ and $M/K$ such that $[L:K]=n$ and $r_K(L)=r$ and $\rho_K (M,L)=\rho$. For $\rho\neq 0$, we get $M/K$ as an extension of $L/K$ contained in $\tilde{L}$.\end{theorem}

The concept of ascending index of an extension was introduced by the author and Bhagwat in Section 7 of \cite{Bhagwat_2025}. For an extension $L/K$, the ascending index of $L/K$, $t_K(L)$ is $[G:H^G]$ and the quantity $u_K(L)$ is $[H^G:H]$ where $H^G$ is the normal closure of $H$ in $G$, i.e. the intersection of all normal subgroups of $G$ that contain $H$ (See Section 7.2 and Section 9 in \cite{Bhagwat_2025} for basic properties of ascending index of field extension). The following is referred to as the Inverse ascending index problem for number fields in \cite{Bhagwat_2025}.

\begin{theorem}[Theorem 9.0.5, \cite{Bhagwat_2025}]
    \label{t,r}
    Let $K$ be a number field. Let $n>2$ and $t|n$. Then there exists an extension $L/K$ of degree $n$ with ascending index $t_K(L)=t$. 
\end{theorem}

Just like we introduced root capacity as a generalization of cluster size in \cite{Bhagwat_2025}, in Section \ref{intersection indicium} of this article we introduce the concept of intersection indicium as a generalization of ascending index. We establish the Intersection Indicium Magnification result, Proposition \ref{tau magn} which is a generalization of Ascending Index Magnification Theorem proved in \cite{Bhagwat_2025}. In Section \ref{inv tau section}, we establish the following as a generalization of Theorem \ref{t,r}. For notations see Sections \ref{intersection indicium} and \ref{inv tau section}.

\begin{theorem}\label{inv tau}
 (Inverse Intersection Indicium Problem for Number Fields) Let $K$ be a number field. Given $(n,t,\tau)$ where $n>2$ and $t\ |\ \tau\ |\ n$ (except for the cases listed below in Remark \ref{excluded}). There exist extensions $L/K$ and $M/K$ such that $[L:K]=n$ and $t_K(L)=t$ and intersection indicium $\tau_K (M,L)=\tau$. We get $M/K$ as an extension of $L/K$ contained in $\tilde{L}$.

\end{theorem}

\begin{remark}
\label{excluded}
    The following cases for $(n,t,\tau)$ are not covered by our proof.\begin{enumerate}
        \item $(n,1,2)$ for $n>2$ and $2|n$.

  \item   $(2\tau, 1, \tau)$ for $\tau>1$.

   \item  $(4t, t, 2t)$ for $t\geq 1$ odd.

   \end{enumerate}

   \end{remark}

   Note that the subcase $(4,1,2)$ is common to all three classes in Remark \ref{excluded} and Proposition \ref{excep prop} proves that this is an impossible case.\smallskip

In Section \ref{compositum indicium}, we introduce the concept of compositum indicium as a generalization of degree of extension. We establish the Compositum Indicium Magnification result, Proposition \ref{gamma magn}. In Proposition \ref{tau interesting eg}, we compute root capacity and intersection indicium and compositum indicium for an interesting example. In Section \ref{inv gamma section}, we state the Inverse Compositum Indicium Problem for number fields and go on to establish some interesting cases of the problem which is encapsulated in the following result. For notations see Sections \ref{compositum indicium} and \ref{inv gamma section}.

\begin{theorem}

Let $K$ be a number field. For the following pairs $(n,\gamma)$ where $n>2$ and $n \mid \gamma\mid  n!$, there exist extensions $L/K$ and $M/K$ such that $[L:K]=n$ and compositum indicium $\gamma_K (M,L)=\gamma$.

\begin{enumerate}
\item $(n,\gamma)$ where $n=2^m a_1a_2\cdots a_k$ with each $a_i>2$ and $m=0$ or $1$ and $\gamma=2^m b_1b_2\cdots b_k$ with each (i) $b_i=\ ^{a_i}P_j$ for $j\leq a_i$ or (ii) $b_i=a_i\phi(a_i/l)$ for $a_i$ odd and $l\mid a_i$ (where $\phi$ is the Euler totient function) or (iii) $b_i=a_i\cdot r^{a-1}$ for $r>1$ and $r\mid a_i$ and $a\leq (a_i/r)$.

\smallskip

\item $(n,\gamma)$ so that for each prime $p$, there exists $k_p\leq v_p(n)$ such that $k_p\mid (v_p(\gamma)-v_p(n))$ and $v_p(\gamma)\leq p^{(v_p(n)-k_p)}\cdot k_p+ (v_p(n)-k_p)$  (where $v_p$ is usual $p$-adic valuation).

\end{enumerate}
\end{theorem}

\subsection{Applications of Root Cluster Theory}

The notion of minimal generating sets of the splitting field of a polynomial was introduced by the author and Vanchinathan in Section 2 in \cite{jaiswal2025minimal}. In Section \ref{Minimal Generating Sets of Galois Closure}, we give a field theoretic formulation for minimal generating sets and prove Proposition \ref{SCM min gen} about strong cluster magnification and minimal generating sets. We prove the following as a significant generalization of the Main Theorem in \cite{jaiswal2025minimal}.

\begin{theorem}\label{inv min gen xn}

Let $K$ be a number field and $n>2$ be an integer.

  \begin{enumerate}

\item  Suppose $\frac{n}{2^{v_2(n)}}>2$ is composite (where $v_2$ is usual $2$-adic valuation). Then there exists an $L/K$ of degree $n$ for which the Galois closure has minimal generating sets of cardinalities $2,3,\dots , \omega'(n)$ (where $\omega'(n)$ is the number of distinct odd prime divisors of $n$) and these are the only possible cardinalities for minimal generating set for $L/K$.

\smallskip

\item  For any $n>2$ and $d\mid n$ with $d>2$, there exists an $L/K$ of degree $n$ for which the Galois closure has all its minimal generating sets of cardinality $k$ (and all cluster towers of length $k$) for the following values of $k$ : (i) $k=d-1$, (ii) $k=d-2$, (iii) $k=2$.

\smallskip

\item For $K=\mathbb{Q}$, there exists an $L/\mathbb{Q}$ of degree $n$ for which the Galois closure has all its minimal generating sets of cardinality $k$ (and all cluster towers of length $k$) for the following values of $n$ and $k$:

\begin{enumerate}

\item (i) $n$ is a multiple of 12 and $k=5$, (ii) $n$ is a multiple of 11 and $k=4$.

\item $n$ is a multiple of $p+1$ where $p$ is an odd prime and $k=3$.

\end{enumerate}
\end{enumerate}

    
\end{theorem}

We then go on to establish the following interesting result (by applying similar methods as in earlier sections). For notations see Section \ref{Minimal Generating Sets of Galois Closure}. 

\begin{theorem}\label{inverse min gen}
    Let $K$ be a number field. Given positive integers $n>2$ and $s|n$ with $s<n$. There exists an $L/K$ of degree $n$ for which the Galois closure has a minimal generating set of cardinality $s$.\smallskip
    
    Furthermore that $L/K$ satisfies $s_K(L)=s$. Hence there is a unique minimal generating set for the Galois closure of $L/K$ which is thus, also a shortest minimal generating set. 
\end{theorem}

 Drungilas et al. introduced the following notion in \cite{drungilas2012degree} with $\mathbb{Q}$ as the base field: A triplet of positive integers $(a,b,c)$ is said to be compositum feasible over $\mathbb{Q}$ if there exist $L/\mathbb{Q}$ and $L'/\mathbb{Q}$ of degrees $a$ and $b$ respectively with the compositum $LL'/\mathbb{Q}$ having degree $c$. 

\begin{theorem}[Theorem 7, \cite{drungilas2012degree}]
\label{drungilas thm}

    A triplet $(a, b, c)$ satisfying $max\{v_p(a), v_p(b)\} \leq v_p(c) \leq v_p(a) + v_p(b)$
for every prime number $p$ (where $v_p$ is usual $p$-adic valuation) is compositum feasible over $\mathbb{Q}$.
\end{theorem}

In our discussion on compositum feasible triplets, we take base field $K$ to be any perfect field unless specified otherwise. 

\begin{definition}
Let $K$ be a perfect field. A triplet of positive integers $(a,b,c)$ is said to be compositum feasible over $K$ if there exist $L/K$ and $L'/K$ of degrees $a$ and $b$ respectively with the compositum $LL'/K$ having degree $c$. We denote the set of compositum feasible triplets over $K$ by $\mathcal{C}_K$.
\end{definition}

Observe that a triplet $(a, b, c)$ satisfies $max\{v_p(a), v_p(b)\} \leq v_p(c) \leq v_p(a) + v_p(b)$
for every prime number $p$ (where $v_p$ is usual $p$-adic valuation) $\iff$ $c=lcm(a,b)\cdot t$ where $t\ |\ gcd(a,b)$ $\iff$  $lcm(a,b)\ |\ c\ |\ ab$. Let $\mathcal{C}'_K$ be the set consisting of triplets of the form $(a,b,c)$ with $c=lcm(a,b)\cdot t$ where $t\ |\ gcd(a,b)$. We establish Theorem \ref{comp feas thm} (1) below (by applying similar methods as in earlier sections) as a generalization of Theorem \ref{drungilas thm} from rationals to number fields. Also Theorem \ref{comp feas thm} (2), (3) is a partial case of Conjecture 4 in \cite{drungilas2012degree} where it is conjectured that $\mathcal{C}_K$ is a commutative multiplicative monoid with $(1,1,1)$ as the identity element. The notion of a compositum feasible triplet being irreducible was introduced in \cite{maciulevivcius2023degree}. In Theorem \ref{comp feas thm} (4), we list all irreducible triplets in $\mathcal{C}'_K$. Propositions \ref{Sn comp feas} and \ref{xn comp feas} cover some other interesting cases of compositum feasible triplets and sum feasible triplets.

\begin{theorem}\label{comp feas thm}

Let $K$ be a number field. 
   \begin{enumerate}

   \item $\mathcal{C}'_K\subset \mathcal{C}_K$. We can choose the field extensions such that compositum has Galois closure over $K$ having a solvable Galois group. We also have that $(a,b,lcm(a,b)\cdot t)$ is a sum feasible and product feasible triplet over $K$.
   
  \smallskip

   \item Suppose $(a,b,c), (a',b',c')\in \mathcal{C}_K$. If $(a,b,c)\in \mathcal{C}'_K$, then we have that $(aa',bb',cc')\in \mathcal{C}_K$. 

\smallskip

   \item $\mathcal{C}'_K$ is a commutative multiplicative monoid. 

\smallskip

\item  A triplet in $\mathcal{C}'_K$ is irreducible if and only if it is one of the following where $p$ is any prime:\\
(i) $(1,1,1)$ (ii) $(1,p,p)$ (iii) $(p,1,p)$ (iv) $(p,p,p)$.

\end{enumerate}
    \smallskip

\end{theorem}


We also improve on the inverse problems proved in \cite{Bhagwat_2025} and this article by proving that there exist arbitrarily large finite families of pairwise non-isomorphic extensions having additional properties that satisfy the given conditions. The following is an improvement on Theorem \ref{n,r}. We obtain similar improvements for Theorems \ref{t,r}, \ref{inv root cap},  \ref{inv min gen xn} (1),  \ref{inverse min gen} and \ref{comp feas thm} (1).

 \begin{theorem}
     
\label{inverse cluster size improv}

 Let $K$ be a number field. Let $n>2$ and $r|n$. Then we get arbitrarily large finite families of extensions $L/K$  inside $\bar{K}$ which are pairwise non-isomorphic over $K$ and are pairwise linearly disjoint over $K$ and each having degree $n$ with cluster size $r_K(L)=r$. 
 \end{theorem}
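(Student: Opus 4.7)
The strategy is to upgrade the proof of Inverse cluster size (Theorem 3.1.1 in \cite{bhagwat2024clustermagnificationrootcapacity}) by running it with $m$ independent copies of the relevant Galois realization, then applying Lemma \ref{arbitrary lemma}. Recall that for $r > 1$ the original proof uses the solvable group $G = (\Z/r\Z)^s \rtimes \Z/s\Z$ with $s = n/r$, and an explicit point stabilizer $H \le G$; for $r = 1$ it uses $G = \mathfrak{S}_n$ with $H$ the stabilizer of a point in the standard action. In both cases one obtains $L = \tilde L^H$ inside a Galois extension $\tilde L/K$ realizing $G$, and verifies $[L:K] = n$ and $r_K(L) = r$ purely group-theoretically.

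First I would realize $G^m$ as a Galois group over the number field $K$. For $r > 1$ this is immediate from Shafarevich's theorem, since $G^m$ is solvable. For $r = 1$ one needs $\mathfrak{S}_n^m$ realizable over $K$; since $K$ is Hilbertian, one can produce $m$ degree-$n$ polynomials over $K$ whose Galois groups are all $\mathfrak{S}_n$ and whose splitting fields are pairwise linearly disjoint (for instance by specializing parameters in a generic polynomial via Hilbert irreducibility, or by directly invoking realizability of $G^m$ in the $\mathfrak{S}_n$ case, which is standard over Hilbertian fields).

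With $G^m$ realized, Lemma \ref{arbitrary lemma} supplies $m$ Galois extensions $E_1,\dots,E_m$ of $K$ inside $\bar K$, each with $\mathrm{Gal}(E_i/K) \cong G$, pairwise non-isomorphic, and pairwise linearly disjoint over $K$. For each $i$ I set $L_i := E_i^{H_i}$, where $H_i$ is the copy of $H$ inside $\mathrm{Gal}(E_i/K)$ under the chosen isomorphism. The group-theoretic computation from the proof of Theorem 3.1.1 in \cite{bhagwat2024clustermagnificationrootcapacity} applies verbatim to each $E_i$, giving $[L_i : K] = n$ and $r_K(L_i) = r$; moreover $E_i = \tilde{L_i}$, since $H$ contains no nontrivial normal subgroup of $G$.

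It remains to transfer the non-isomorphism and linear disjointness from the $E_i$'s to the $L_i$'s. Linear disjointness is immediate: $L_i L_j \subseteq E_i E_j$, and linear disjointness of $E_i$ and $E_j$ over $K$ descends to every pair of subfields. For non-isomorphism, a $K$-isomorphism $L_i \cong L_j$ extends to an automorphism $\sigma \in \mathrm{Gal}(\bar K/K)$ with $\sigma(L_i) = L_j$, hence $\sigma(\tilde{L_i}) = \tilde{L_j}$; since $\tilde{L_i} = E_i$ is Galois over $K$, $\sigma$ fixes it setwise, forcing $E_i = E_j$ and therefore $E_i = E_i \cap E_j = K$, contradicting $[E_i : K] = |G| > 1$ (using $n > 2$). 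The one genuinely non-routine step is the realization of $G^m$, particularly for the $\mathfrak{S}_n$ case; everything else is a direct assembly of Lemma \ref{arbitrary lemma} with the construction already present in \cite{bhagwat2024clustermagnificationrootcapacity}.
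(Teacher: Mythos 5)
Your proposal is correct and follows essentially the same route as the paper: realize $G^m$ (via Shafarevich for the solvable $r>1$ case, via Hilbertianity for $\mathfrak{S}_n$ when $r=1$), apply Lemma \ref{arbitrary lemma} to get pairwise linearly disjoint $E_i$ with group $G$, and pass to the fixed fields of the point stabilizer. The only difference is cosmetic: you spell out the descent of non-isomorphism and linear disjointness from the $E_i$ to the $L_i$, which the paper leaves implicit.
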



\section{Inverse Problems in Root Cluster Theory}

 \subsection{Some Remarks on Unique Intermediate Extensions}\label{unique inter ext}

Let $N/K$ be the unique intermediate extension of $L/K$ such that $L/N$ is Galois with maximum possible degree as in Section 7.1 of \cite{Bhagwat_2025}.

 \begin{proposition}
   Consider $L/P/K$.

   \begin{enumerate}
       \item 
  Then $r_K(L)/r_P(L)=[N_G(H):N_{G_0}(H)]$ where $G_0=Gal(\tilde{L}/P)$. In particular $r_P(L)|r_K(L)$.\smallskip

  \item  Let unique intermediate extension for $L/P$ be $N_1/P$. Then $NP= N_1$.

  \item $s_K(L)\ |\ (s_P(L)[P:K])$.

   \end{enumerate}

  \end{proposition}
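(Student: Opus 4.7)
The plan is to translate everything into group theory via the fundamental theorem of Galois theory and reduce each claim to an elementary index manipulation inside $G$. Before anything else, I would justify the implicit convention that $r_P(L)$ and $s_P(L)$ can be computed inside $\tilde{L}$ using $G_0 = \mathrm{Gal}(\tilde{L}/P)$ and the same $H = \mathrm{Gal}(\tilde{L}/L)$, even though the genuine Galois closure of $L/P$ may be a proper subfield $\tilde{L}_P \subsetneq \tilde{L}$. Writing $C := \bigcap_{\sigma \in G_0} \sigma H \sigma^{-1} = \mathrm{Gal}(\tilde{L}/\tilde{L}_P)$ for the core of $H$ in $G_0$, restriction identifies $\mathrm{Gal}(\tilde{L}_P/P)$ with $G_0/C$ and $\mathrm{Gal}(\tilde{L}_P/L)$ with $H/C$; the standard identity $N_{G_0/C}(H/C) = N_{G_0}(H)/C$ then yields $r_P(L) = [N_{G_0}(H):H]$ and $s_P(L) = [G_0:N_{G_0}(H)]$ exactly as if $\tilde{L}$ itself were the Galois closure of $L/P$.

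Once this reduction is in place, part (1) is immediate. Since $P \subseteq L$, we have $H \leq G_0 \leq G$, so $N_{G_0}(H) = N_G(H) \cap G_0$ is a subgroup of $N_G(H)$, and the tower of indices gives
\[
\frac{r_K(L)}{r_P(L)} = \frac{[N_G(H):H]}{[N_{G_0}(H):H]} = [N_G(H) : N_{G_0}(H)],
\]
from which $r_P(L) \mid r_K(L)$ follows.

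For part (2), I would invoke the standard group-theoretic description of the unique maximal intermediate extension: $N = \tilde{L}^{N_G(H)}$ (corresponding to the largest overgroup of $H$ in $G$ in which $H$ is normal) and, symmetrically, $N_1 = \tilde{L}^{N_{G_0}(H)}$. Using the fact that the fixator of a compositum is the intersection of the individual fixators, I obtain
\[
\mathrm{Gal}(\tilde{L}/NP) = \mathrm{Gal}(\tilde{L}/N) \cap \mathrm{Gal}(\tilde{L}/P) = N_G(H) \cap G_0 = N_{G_0}(H),
\]
so $NP$ and $N_1$ have the same fixator inside $\tilde{L}$ and therefore coincide.

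Part (3) then follows from the multiplicativity of indices: $s_P(L)\cdot [P:K] = [G_0:N_{G_0}(H)] \cdot [G:G_0] = [G:N_{G_0}(H)]$, and the inclusion $N_{G_0}(H) \leq N_G(H)$ forces $s_K(L) = [G:N_G(H)]$ to divide $[G:N_{G_0}(H)]$, with quotient exactly $[N_G(H):N_{G_0}(H)]$. The only point in the whole argument that is not pure index bookkeeping is the initial identification of $r_P(L)$ and $s_P(L)$ with invariants of the pair $(G_0, H)$ inside the larger group $G$; once that convention is fixed, the three parts become essentially a single group-theoretic computation repeated in different guises.
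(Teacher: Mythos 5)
Your proof is correct and follows essentially the same route as the paper: identify $r_P(L)$ and $s_P(L)$ with $[N_{G_0}(H):H]$ and $[G_0:N_{G_0}(H)]$, use $N_{G_0}(H)=N_G(H)\cap G_0$, and finish by index bookkeeping. The only differences are to your credit: you justify the identification of $r_P(L)$ with $[N_{G_0}(H):H]$ directly via the core $C$ of $H$ in $G_0$ (using $C\leq H$ to get $N_{G_0/C}(H/C)=N_{G_0}(H)/C$) where the paper simply cites Perlis's proposition, and you supply the argument for part (3), which the paper omits.
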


   \begin{proof} We will prove (1) and (2).

   \begin{enumerate}
       \item Now $\tilde{L}/P$ is Galois with Galois group $G_0$. From the first proposition in \cite{perlisroots}, $r_P(L)=|Aut(L/P)|=[N_{G_0}(H):H]$. Thus $r_K(L)/r_P(L)=[N_G(H):H]/[N_{G_0}(H):H]=[N_G(H):N_{G_0}(H)]$. Since $N_{G_0}(H)\subset N_G(H)$, we have $r_P(L)|r_K(L)$.\smallskip

       \item By Theorem 7.1.1 in \cite{Bhagwat_2025}, $N=\tilde{L}^{N_G(H)}$ and $N_1=\tilde{L}^{N_{G_0}(H)}$. Also $N_G(H)\cap G_0=N_{G_0}(H)$. Thus $NP= N_1$.\end{enumerate}\end{proof}

Let $\sigma_i$ be coset representatives of $N_G(H)$ in $G$ with $\sigma_1=1$. Let $H_i=\sigma_i H \sigma_i^{-1}$. Then $L_i=\tilde{L}^{H_i}$ are the $s_K(L)$ many distinct fields isomorphic to $L$ over $K$.\smallskip

Let $F/K$ be the unique intermediate extension of $L/K$ which is Galois with maximum possible degree as in Section 7.2 of \cite{Bhagwat_2025}.

\begin{proposition}\hfill
\label{unique F}
    \begin{enumerate}
        \item $F=\cap_{i=1}^{s} L_i$ where $s=s_K(L)$.\smallskip

        \item Let $L=K(\alpha)$ for a primitive element $\alpha\in \bar{K}$ with minimal polynomial $f$ over $K$. A cluster of $f$ is defined as the subset of roots of $f$ belonging to the field generated by a single root of $f$ over $K$ and clusters form equivalence classes for the obvious equivalence relation among the roots of $f$ (For details of this notion, see \cite{krithika2023root} and \cite{perlis2004roots}). Let $\{\beta_i\}_{=1}^s$ be a complete set of representatives of root clusters of $f$. Then $F=\cap_{i=1}^{s}  K(\beta_i)$.\smallskip  

        \item $F/K$ is the unique intermediate extension of each $L_i/K$ which is Galois with maximum possible degree.
        
    \end{enumerate}
\end{proposition}

\begin{proof}\hfill

\begin{enumerate}
    \item  We have from Theorem 7.2.1 in \cite{Bhagwat_2025}, that $F=\tilde{L}^{H^G}$. We observe that $H^G$ is the subgroup of $G$ generated by all $H_i$'s for $1\leq i\leq s$. Hence by Galois correspondence, $F=\cap_{i=1}^{s} L_i$. 

    \item Now number of clusters of $f$ is $s_K(L)=s$. By a suitable reordering, we have that each $L_i=K(\beta_i)$.

    \item Follows from part (1).
\end{enumerate}\end{proof}

\begin{remark}
    Proposition \ref{unique F} helps us to give alternate proofs for certain cases in Section 7.3 in \cite{Bhagwat_2025}. The following example encapsulates those cases with the alternate proofs.
    \end{remark}

    \begin{example}\hfill\label{unique F example}
        \begin{enumerate}

        \item Consider $K=\mathbb{Q}$ and let $n>2$. Fix $\zeta$ to be a primitive $n$-th root 
of unity in $\bar{\mathbb{Q}}$. Let $c$ be a positive rational number such that $f=x^n-c$ is
  an irreducible polynomial over $\mathbb{Q}$. Let $a = c^{1/n}$ be the positive real root of $f$. Assume $n$ to be either odd; or even with $\sqrt{c}\not \in \mathbb{Q}(\zeta)$ (Similar to conditions in Example 5.1.3 in \cite{Bhagwat_2025}). Then the unique $F/\mathbb{Q}$ for $\mathbb{Q}(a)/\mathbb{Q}$ is $\mathbb{Q}$ for $n$ odd \& $\mathbb{Q}(a^{n/2})$ for $n$ even.  
  
  By Proposition 1 in \cite{jacobson1990galois} and Theorem A in \cite{jacobson1990galois}, $n$ is odd or, $n$ is even with $\sqrt{c}\not \in \mathbb{Q}(\zeta)$  if and only if  $\mathbb{Q}(a)\cap \mathbb{Q}(\zeta)=\mathbb{Q}$. Thus $[\mathbb{Q}(\zeta)(a):\mathbb{Q}(\zeta)]=n$. Hence we have the set $\{a^i\}_{i=0}^{n-1}$ to be linearly independent over $\mathbb{Q}(\zeta)$. Let $\gamma \in \mathbb{Q}(a)\cap \mathbb{Q}(a\zeta)$.\\ 
        Then $\gamma=a_0 +a_1 a + \dots + a_{n-1} a^{n-1}= b_0 +b_1 (a\zeta) + \dots + b_{n-1} (a\zeta)^{n-1}$ for $a_i,b_i\in \mathbb{Q}$ for all $0\leq i\leq n-1$. Thus $a_0 +a_1 a + \dots + a_{n-1} a^{n-1}= b_0 +(b_1 \zeta) a + \dots + (b_{n-1} \zeta^{n-1})a^{n-1}$. Hence for all $0\leq i\leq n-1$, we have $a_i=b_i \zeta^i$. If $n$ is odd, $a_i=0$ for all $i\neq 0$ and if $n$ is even, $a_i=0$ for all $i\neq 0, n/2$. Also for $n$ even, $a^{n/2}\in \mathbb{Q}(a\zeta^i)$ for all $0\leq i\leq n-1$. Hence by Proposition \ref{unique F}, for n odd, $F=\mathbb{Q}$ and for n even $F=\mathbb{Q}(a^{n/2})$.
\bigskip

            \item Let $f$ over $K$ be irreducible of degree $n>2$ with Galois group ${\mathfrak S}_n$ with roots $\alpha_i\in \bar{K}$ for $1\leq i\leq n$. Let $L=K(\alpha_1)$. Then the unique $F/K$ for $L/K$ is $K$.\medskip

 Since the Galois group is ${\mathfrak S}_n$, we have $[K(\alpha_1,\alpha_2): K(\alpha_1)]=n-1$. Hence $K(\alpha_1)$ and $K(\alpha_2)$ are distinct fields. Let $[F:K]=t$. By Proposition \ref{unique F}, $F\subset K(\alpha_2)$. We have $[K(\alpha_1):F]=[K(\alpha_2):F]=n/t$. Since $[K(\alpha_1,\alpha_2): K(\alpha_1)]\leq [K(\alpha_2):F]$. Thus $n-1\leq n/t$. Which holds only if $t=1$ i.e. $F=K$.
        \end{enumerate}
    \end{example}

 \begin{proposition}
   Consider $L/P/K$.

   \begin{enumerate}
       \item Then $t_K(L)/t_K(P)=[G_0^G:H^G]$ where $G_0=Gal(\tilde{L}/P)$. In particular $t_K(P)|t_K(L)$.

       \item Let unique intermediate extension for $L/P$ be $F_1/P$. Then $F\subset F_1$. Thus $t_P(L)[P:K]=t_K(L)[H^G: H^{G_0}]$. In particular $t_K(L)\ |\ (t_P(L)[P:K])$.

       \item $u_P(L)|u_K(L)$ and $u_K(L)\ |\ (u_K(P)[L:P])$.

       \end{enumerate}

  \end{proposition}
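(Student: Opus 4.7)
The strategy is to work entirely inside $\tilde L$ and translate the whole statement into subgroup arithmetic in $G$. The key structural observation is that the Galois closure $\tilde P$ of $P/K$ sits inside $\tilde L$ and, via Galois correspondence, equals $\tilde L^{G_0^G}$, where $G_0^G$ denotes the normal closure of $G_0$ in $G$; in particular $t_K(P)=[G:G_0^G]$. Since $L\supset P$ forces $H\subset G_0$, one immediately has $H^G\subset G_0^G$, and once this is in place the whole proposition reduces to multiplicativity of indices along the chain $H\subset H^{G_0}\subset H^G\subset G_0^G\subset G$.

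For part (1), I would compute $t_K(L)/t_K(P)=[G:H^G]/[G:G_0^G]=[G_0^G:H^G]$, which is an integer by the inclusion noted above, giving both the formula and the divisibility.

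For part (2), the maximal intermediate extension of $L/P$ that is Galois over $P$ corresponds, by the same recipe applied inside $\tilde L$ but with $P$ in place of $K$, to $H^{G_0}$, so $F_1=\tilde L^{H^{G_0}}$. Every $G_0$-conjugate of $H$ is in particular a $G$-conjugate, so $H^{G_0}\subset H^G$, and passing to fixed fields reverses the inclusion to yield $F\subset F_1$. The numerical identity then follows from
\[t_P(L)[P:K]=[G_0:H^{G_0}]\,[G:G_0]=[G:H^{G_0}]=[G:H^G]\,[H^G:H^{G_0}]=t_K(L)[H^G:H^{G_0}].\]

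For part (3), the chain $H\subset H^{G_0}\subset H^G$ gives $u_P(L)=[H^{G_0}:H]\mid[H^G:H]=u_K(L)$. For the second divisibility, one has $u_K(P)[L:P]=[G_0^G:G_0]\,[G_0:H]=[G_0^G:H]$, and since $H\subset H^G\subset G_0^G$, the index $u_K(L)=[H^G:H]$ divides $[G_0^G:H]$. The only step that is not a mechanical index computation is the identification $\tilde P=\tilde L^{G_0^G}$ and the resulting formula $t_K(P)=[G:G_0^G]$; this is where I expect to need most care, since $t_K(P)$ is naturally defined through the Galois group of $\tilde P/K$ rather than of $\tilde L/K$, so one has to verify that the quotient map $G\to{\rm Gal}(\tilde P/K)$ sends $G_0^G$ to the analogous normal closure inside that smaller group.
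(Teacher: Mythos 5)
Your overall route is the same as the paper's: convert everything to index arithmetic in $G$ along the chain $H \subset H^{G_0} \subset H^G \subset G_0^G \subset G$ and use multiplicativity of indices. The paper proves only part (2); it obtains the inclusion $H^{G_0} \subset H^G$ via the observation $H^G \cap G_0 \unlhd G_0$, whereas you note more directly that every $G_0$-conjugate of $H$ is a $G$-conjugate, but the resulting index identity is identical. Your treatment of (1) and (3) is the natural extension of that technique.

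There is, however, one genuine misstatement in your preamble that, read literally, breaks the argument. You claim the Galois closure $\tilde P$ of $P/K$ equals $\tilde L^{G_0^G}$. That is false: under the Galois correspondence, the Galois closure of $P/K$ corresponds to the normal \emph{core} $\bigcap_{g\in G} gG_0 g^{-1}$, not the normal closure $G_0^G$. (Indeed, since $G_0\subset G_0^G$, the identification $\tilde P=\tilde L^{G_0^G}$ would force $\tilde P\subset P$, i.e.\ $P/K$ Galois, which is absurd in general.) What is actually true is that $\tilde L^{G_0^G}$ is the maximal subextension of $P/K$ which is Galois over $K$, i.e.\ the field the paper calls $F$ but attached to $P$; the formula $t_K(P)=[G:G_0^G]$ you need then follows because $t_K(P)$ equals the degree of that maximal Galois subextension over $K$ (Theorem 7.2.1 of the cited reference), or equivalently by the quotient computation you sketch at the end: with $N=\bigcap_{g\in G} gG_0 g^{-1}$, the image of $G_0^G$ under $G\to G/N\cong\mathrm{Gal}(\tilde P/K)$ is precisely the normal closure of $G_0/N\cong\mathrm{Gal}(\tilde P/P)$, so $t_K(P)=[G/N:G_0^G/N]=[G:G_0^G]$. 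The same repair justifies $u_K(P)=[G_0^G:G_0]$ in part (3). Once this identification is corrected, the remaining index arithmetic is sound and matches what the paper (implicitly) leaves to the reader.
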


  \begin{proof}
We will prove (2). Since $H^G\cap G_0\unlhd G_0$. Hence $H^{G_0}\subset H^G\cap G_0\subset H^G$. From Theorem 7.2.1 in \cite{Bhagwat_2025}, $F=\tilde{L}^{H^G}$ and $F_1=\tilde{L}^{H^{G_0}}$. Thus $F\subset F_1$. Now $t_P(L)=[G_0:H^{G_0}]=[F_1:P]$. Since $t_K(L)=[G:H^G]=[F:K]$ and $[F:K][F_1:F]=[F_1:K]=[F_1:P][P:K]$, we are done.
  \end{proof}

\subsection{Root Capacity and Cluster Towers}\label{Root Capacity and Cluster Towers}

We begin with the following definition.

\begin{definition}[Definition 4.1.1 \cite{Bhagwat_2025}]
\label{SCM}
A finite extension $M/K$ is said to be obtained by strong cluster magnification from a subextension $L/K$ if we have the following:
\smallskip
 
 \begin{enumerate}
 \item  $[L:K] = n > 2,$ 
 \smallskip
      
\item there exists a finite Galois extension $F/K$ such that the Galois closure $\tilde{L}$ of $L/K$ in $\bar{K}$ and $F$ are linearly disjoint over $K$ i.e. $\tilde{L}\cap F=K$. \smallskip

\item $LF=M$.\smallskip

 \end{enumerate}

The number $[F:K]$ is called the magnification factor and denoted by $d$.

\end{definition}

   The following example negatively answers the Problem 10.2.6 on strong cluster magnification in Chapter 10 of PhD Thesis \cite{ShubhamJaiswalPhDThesis} of the author.

\begin{example}

 Consider the case in Example \ref{unique F example} (1) with $n\equiv 2\ (\text{mod}\ 4)$. From Example 7.3.3 in \cite{Bhagwat_2025}, we have that $\mathbb{Q}(a)/\mathbb{Q}$ is obtained by nontrivial strong cluster magnification from $\mathbb{Q}(a^{2})/\mathbb{Q}$ through $\mathbb{Q}(a^{n/2})/\mathbb{Q}$. We also have that both the extensions $\mathbb{Q}(a^{2})/\mathbb{Q}$ and $\mathbb{Q}(a)/\mathbb{Q}(a^{n/2})$ have cluster size 1.\smallskip

  Hence we conclude that if $M/K$ is obtained by nontrivial strong cluster magnification from some $L/K$ and $K\subset M'\subset M$ and $K\subset K'\subset M$. Then it is not necessary that $M'/K$ or $M/K'$ are obtained by nontrivial strong cluster magnification from some subextensions.

\end{example}

\smallskip

The following is a reformulation (in terms of strong cluster magnification property) of a result proved for polynomials in Section 3.1 of \cite{krithika2023root} and reformulated for field extensions in Section 4 of \cite{krithika2023root} which is referred to as the Cluster Magnification theorem.  

\begin{proposition}

\label{cluster magnification}
    Let $M/K$ be obtained by strong cluster magnification from $L/K$ with magnification factor $d$. Then $[M:K]=d\ [L:K]$ and $r_K(M)=d \ r_K(L)$.
\end{proposition}

\begin{remark}
Suppose $M/K$ and $M'/K$ are isomorphic over $K$ and $L/K$ and $L'/K$ are isomorphic over $K$. Then $\rho_K(M,L)=\rho_K(M',L')$. 
\end{remark}

\begin{example}\label{rho Sn}
   Adding to Example 6.2.7 in \cite{Bhagwat_2025}. Consider the case in Example \ref{unique F example} (2). Let $L_k=K(\alpha_1,\dots, \alpha_k)$ and let $L=L_1$ and $L_0=K$. So Galois closure of $L/K$ is $\tilde{L}=L_{n-1}$.  \smallskip

   Now $L_{k+1}=L_{k}(\alpha_{k+1})$. One can verify that the minimal polynomial of $\alpha_{k+1}$ over $L_{k}$ has degree $n-k$ and has the roots $\alpha_{k+1},\alpha_{k+2},\dots, \alpha_n$. Also $\alpha_i\not \in L_{k+1}$ for $i>k+1$ and $k\leq n-3$. Thus for $0\leq k\leq n-2$, Galois closure of $L_{k+1}/L_{k}$ is $L_{n-1}/L_{k}$ with Galois group $\mathfrak{S}_{n-k}$. \smallskip
   
   We have for $0\leq k\leq n-3$ that $[L_{k+j}:L_{k}]=\ ^{n-k}P_j$ and $\rho_{L_{k}}(L_{k+j},L_{k+1})=j$ where\\
   $1\leq j\leq n-2-k$. Also by Theorem 3 in \cite{krithika2023root}, we have $r_{L_{k}}(L_{k+j})=j!$. Thus by proof of Theorem 3.2.4 in \cite{Bhagwat_2025}, we have $\rho_{L_k}(L_{k+j},L_{k+l})=\ ^{j}C_{l}\ r_{L_k}(L_{k+l})=\ ^{j}C_{l}\ l!=\ ^{j}P_{l}$ where $1\leq l\leq j$.\smallskip

   We have $r_{L_k}(L_{k+j})=j!= l!\ (j-l)!\ ^jC_l=r_{L_k}(L_{k+l})r_{L_{k+l}}(L_{k+j})\ ^jC_l$. Also for $1\leq m< l$, $\rho_{L_k}(L_{k+j},L_{k+l})=\ ^jP_m\ ^{j-m}P_{l-m}=\rho_{L_k}(L_{k+j},L_{k+m})\rho_{L_{k+m}}(L_{k+j},L_{k+l})$.\end{example}

   \begin{remark}
  Consider $L/K$ and $M/M'/K$. Then $\rho_K(M',L)\ |\ \rho_K(M,L)$ is not true in general. For example consider the case in Example \ref{rho Sn}. Now $\rho_K(L_j,L_1)=j$ and $\rho_K(L_{j+1},L_1)=j+1$. Now $j\nmid (j+1)$ for $j\geq 2$.
\end{remark}

\begin{remark}
    Consider $M/L/L'/K$. Then the statements $r_K(L')r_{L'}(L)|r_K(L)$ and $\rho_K(M,L)=\rho_K(M,L') \rho_{L'}(M,L)$ are not true in general. Let $M=\mathbb{Q}(\sqrt[8]{2}),L=\mathbb{Q}(\sqrt[4]{2}), L'=\mathbb{Q}(\sqrt{2}), K=\mathbb{Q}$. Then $\rho_K(M,L)=r_K(L)=2\neq 4=2\cdot 2=r_K(L')r_{L'}(L)=\rho_K(M,L') \rho_{L'}(M,L)$.
\end{remark}

We can generalize Cluster Magnification result, Proposition \ref{cluster magnification} for root capacity by using results from \cite{Bhagwat_2025}. 

\begin{proposition}
    
\label{root capacity mag}

(Root Capacity Magnification) Consider $M/L/K$. Suppose $M'/K$ and $L'/K$ are obtained by strong cluster magnification from $M/K$ and $L/K$ respectively through the same $F/K$ with magnification factor $d$. Then $\rho_K(M',L')=d\ \rho_K(M,L)$. 
    
\end{proposition}

\begin{proof}
  Let $\rho_K(M,L)=a\ r_K(L)$ and $\rho_{K}(M',L')=a' \ r_{K}(L')$ and $\rho_{F}(M',L')=a''\ r_F(L')$ where $a, a', a''$ are as in Definition \ref{root capacity}. From Proposition \ref{cluster magnification}, we have $r_K(L')=d\ r_{K}(L)$.\smallskip

By Definition \ref{SCM}, $M'=MF$ and $L'=LF$ where both the pairs $\tilde{M}$ and $F$ and $\tilde{L}$ and $F$ are linearly disjoint over $K$. By Lemma 8.1.7 in \cite{Bhagwat_2025}, $r_F(LF)=r_K(L)$. By Base Change Theorem for root capacity, Theorem 8.2.2 in \cite{Bhagwat_2025}, $\rho_{F}(MF,LF)=\rho_K(M,L)$. Hence $a=a''$.\smallskip

Now by Corollary 8.1.5 in \cite{Bhagwat_2025}, $P/K$ is isomorphic to $LF/K$ $\iff$ $P/F$ is isomorphic to $LF/F$. Since $a'$ is number of distinct fields inside $MF$ isomorphic to $LF$ over $K$ and $a''$ is number of distinct fields inside $MF$ isomorphic to $LF$ over $F$. Hence $a'=a''$. Therefore $a=a'$ and we are done. \end{proof}

\smallskip

In \cite{krithika2023root}, the notion of cluster tower of polynomials is introduced. See Section 5.2 in \cite{Bhagwat_2025} for the group theoretic formulation. By using Section 3.2 in \cite{Bhagwat_2025}, we give the following field theoretic formulation.\smallskip

 \textbf{Cluster tower of an extension:} Consider $L/K$. Consider an ordering $(L_1,L_2,\dots,L_s)$ of distinct fields isomorphic to $L$ over $K$ where $s=s_K(L)$. Now consider the following cluster tower of fields
terminating at the Galois closure $\tilde{L}$.
\[ K \subseteq L_1 \subseteq L_1 L_2 \subseteq \dots \subseteq L_1 L_2 \cdots L_s = \tilde{L}.\]

We define the length of tower as defined in Section 2.2 in \cite{jaiswal2025minimal}. The length of tower is the number of strict inclusions in the tower. We also have the notion of degree sequence of the tower which is the ordered sequence of the degrees of the distinct extensions over $K$ in the tower. Clearly length of tower $\leq s$. \smallskip

Example 5.1.3 in \cite{Bhagwat_2025} demonstrates that both the degree sequence and length of tower are dependent on the ordering of the $L_i$’s.

\begin{proposition}\label{tower capacity}
 Suppose there exists a permutation $(i_1,i_2,\dots, i_s)$ of $(1,2,\dots, s)$ such that \[ K \subseteq L_{i_1} \subseteq L_{i_1} L_{i_2} \subseteq \dots \subseteq L_{i_1} L_{i_2} \cdots L_{i_s} = \tilde{L}.\] is a cluster tower for $L/K$ of length $s$. Then for each $0\leq a\leq s$ there exists an $M/K$ such that $\rho_K(M,L)=a\ r_K(L)$.

\end{proposition}

\begin{proof}
   For $a=0$, $M=K$ works. We claim that for $a\geq 1$, $M= L_{i_1} L_{i_2}\cdots L_{i_a}$ works. Since length of tower is $s$, we have each field in the tower to be a proper subset of successive field. Hence $\rho_{K}(L_{i_1} L_{i_2}\cdots L_{i_a}, L)\geq a\ r_K(L)$. If $\rho_{K}(L_{i_1} L_{i_2}\cdots L_{i_a}, L)> a\ r_K(L)$, then because of proper containment at each step, we will have $\rho_{K}(L_{i_1} L_{i_2}\cdots L_{i_s}, L)> s\ r_K(L)=[L:K]$ which is a contradiction. Thus $\rho_K(M,L)= a\ r_K(L)$.\end{proof}\smallskip

   \begin{proposition}
       \label{SCM cluster tower}
  
            Let $M/K$ be obtained by strong cluster magnification from $L/K$ through $F/K$. Then \[ K \subseteq L_1 \subseteq L_1 L_2 \subseteq \dots \subseteq L_1 L_2 \cdots L_s\] is a cluster tower for $L/K$ of length $l$ if and only if \[ K \subseteq L_1F \subseteq L_1 L_2F \subseteq \dots \subseteq L_1 L_2 \cdots L_sF\] is a cluster tower for $M/K$ of length $l$. If degree sequence of first tower is $(a_0,a_1,\dots, a_{l-1})$, then degree sequence of second tower is $(a_0 d,a_1 d,\dots, a_{l-1} d)$ where $d=[F:K]$.

 \end{proposition}

   \begin{proof}
By Corollary 8.1.5 in \cite{Bhagwat_2025}, the distinct fields inside $\bar{K}$ isomorphic to $M$ over $K$ are precisely $L_iF$ for $1\leq i\leq s$. Thus it is enough to show that, for any $1\leq k\leq s-1$, $L_1 L_2 \cdots L_k = L_1 L_2 \cdots L_k L_{k+1}$ if and only if $L_1 L_2 \cdots L_k F = L_1 L_2 \cdots L_k L_{k+1} F$.\smallskip

 Since $\tilde{L}\cap F=K$. Hence for $1\leq k\leq s$, we have $L_1 L_2 \cdots L_k \cap F=K$ and $L_1 L_2 \cdots L_k F \cap \tilde{L} = L_1 L_2 \cdots L_k$. If for any $1\leq k\leq s-1$, $L_1 L_2 \cdots L_k F = L_1 L_2 \cdots L_k L_{k+1} F$. Then $L_1 L_2 \cdots L_k = L_1 L_2 \cdots L_k F \cap \tilde{L} = L_1 L_2 \cdots L_k L_{k+1} F \cap \tilde{L}= L_1 L_2 \cdots L_k L_{k+1}$. We also have $[L_1 L_2 \cdots L_k F : K]=[L_1 L_2 \cdots L_k : K][F:K]$.\end{proof}

  \subsection{Intersection Indicium : A Generalization of Ascending Index}\label{intersection indicium}

  The following is Theorem 9.0.4 in \cite{Bhagwat_2025} referred to as the Ascending Index Magnification Theorem.

  \begin{proposition}
      
  \label{asc ind mag thm}
       Let $M/K$ be obtained by strong cluster magnification from $L/K$ with magnification factor $d$. Then $[M:K]=d\ [L:K]$ and $t_K(M)=d \ t_K(L)$.
  \end{proposition}

We define the following notion.

\begin{definition}
    Consider extensions $L/K$ and $M/K$. Let $L_1,L_2,\dots , L_a$ be all the distinct fields inside $M$ isomorphic to $L$ over $K$. Let $F'=\cap_{i=1}^a L_i$. We define intersection indicium of $M$ with respect to $L$ (with base field $K$ fixed) as $\tau_K(M,L)=[F':K]$. If none of the fields isomorphic to $L/K$ is contained in $M$, then we define $\tau_K(M,L)=0$.
\end{definition}

\begin{remark}\label{tau group theoretic}
    Equivalently $\tau_K(M,L)=[G:H']$ where $G=Gal(\tilde{L}/K)$ (where $\tilde{L}$ is Galois closure of $L/K$) and $H'$ is subgroup of $G$ generated by subgroups $H_1, H_2,\dots, H_a$ which fix $L_1,L_2,\dots,L_a$ respectively.  
\end{remark}

\begin{remark}\label{rem tau}
    Suppose $M/K$ and $M'/K$ are isomorphic over $K$ and $L/K$ and $L'/K$ are isomorphic over $K$. Then $\tau_K(M,L)=\tau_K(M',L')$ (If the isomorphism from $M$ to $M'$ maps $L_1,L_2,\dots,L_a$ to $L_{i_1},L_{i_2},\dots, L_{i_a}$ respectively, then it maps $\cap_{j=1}^a L_j$ to $\cap_{j=1}^a L_{i_j}$).
\end{remark}

The following are certain basic properties which demonstrate that intersection indicium is indeed a generalization of ascending index.

\begin{proposition}\label{tau prop}
Consider an extension $L/K$.

    \begin{enumerate}
        \item $\tau_K(L,L)=[L:K]$. Also $\tau_K(L,K)=1$. If $L\neq K$ then $\tau_K(K,L)=0$.\smallskip

        \item $\tau_K(\tilde{L},L)=t_K(L)$ where $\tilde{L}$ is Galois closure of $L/K$.  \smallskip

        \item For any extension $M/K$, $t_K(L)\ |\ \tau_K(M,L)$.\smallskip

        \item For any $M/K$, if $\tau_K(M,L)\neq 0$ then $\tau_K(M,L)\ |\ [L:K]$. Thus if $\tau_K(M,L)\neq 0$ then we have $\tau_K(M,L)=b\ t_K(L)$ where $b\ |\ u_K(L)$.
\end{enumerate}\end{proposition}

\begin{proof}\hfill

\begin{enumerate}
    \item Now $M=L$ contains only one field isomorphic to $L/K$ that is $L$. Thus $F'=L$. Now if $L\neq K$ then $M=K$ does not contain any field isomorphic to $L/K$.\smallskip

    \item We have that $M=\tilde{L}$ contains all the $s_K(L)$ many distinct fields isomorphic to $L/K$. Thus $F'=F$ by Proposition \ref{unique F} (1) where $F/K$ is the unique intermediate extension of $L/K$ which is Galois with maximum possible degree. Since $t_K(L)=[F:K]$, we are done. \smallskip

    \item By Proposition \ref{unique F} (1) we have that $F$ is the intersection of all the $s_K(L)$ many distinct fields isomorphic to $L/K$. Thus $F\subset F'$. Hence $[F:K]\ |\ [F':K]$.\smallskip

    \item Since $\tau_K(M,L)=[\cap_{i=1}^a L_i : K]$. Thus $\tau_K(M,L)$ divides $[L_1:K]=[L:K]$. Since $[L:K]=t_K(L)u_K(L)$ we are done.\end{enumerate} \end{proof}

We can generalize Ascending Index Magnification result, Proposition \ref{asc ind mag thm} for intersection indicium.

\begin{proposition}
    
(Intersection Indicium Magnification) \label{tau magn} Consider $M/L/K$. Suppose $M'/K$ and $L'/K$ are obtained by strong cluster magnification from $M/K$ and $L/K$ respectively through the same $F/K$ with magnification factor $d$. Then $\tau_K(M',L')=d\ \tau_K(M,L)$. 
\end{proposition}

\begin{proof}
    By proof of Proposition \ref{root capacity mag}, the distinct fields inside $M'$ isomorphic to $L'/K$ are precisely $\{L_iF\}_{i=1}^a$ where $\{L_i\}_{i=1}^a$ are fields inside $M$ isomorphic to $L/K$. Let $G'=Gal(\tilde{L'}/K)$ and $G=Gal(\tilde{L}/K)$ and $R=Gal(F/K)$. By Proposition 4.1.5 (3) in \cite{Bhagwat_2025}, we can identify $G'$ with $G\times R$. So $Gal(\tilde{L'}/\tilde{L})=1\times R$. If $H_i= Gal(\tilde{L}/L_i)$ then $H_i\times R=Gal(\tilde{L'}/L_i)$. Also $G\times 1=Gal(\tilde{L'}/F)$. Thus $(H_i\times R)\cap (G\times 1)=H_i\times 1=Gal(\tilde{L'}/(L_iF))$. Hence $Gal(\tilde{L'}/(\cap_{i=1}^a L_iF))$ is generated by subgroups $\{H_i\times 1\}_{i=1}^a$ which is $H'\times 1$ where $H'$ is subgroup of $G$ generated by subgroups $\{H_i\}_{i=1}^a$. Also $Gal(\tilde{L'}/(\cap_{i=1}^a L_i))=H'\times R$. Since $(H'\times R)\cap (G\times 1)=H'\times 1$. Hence $(\cap_{i=1}^a L_i)F=(\cap_{i=1}^a L_iF)$. Since $\tilde{L}\cap F=K$ we have $(\cap_{i=1}^a L_i)\cap F=K$ where $F/K$ is Galois. Thus we have $\tau_K(M',L')=[(\cap_{i=1}^a L_iF):K]=[(\cap_{i=1}^a L_i)F:K]=[(\cap_{i=1}^a L_i):K][F:K]=\tau_K(M,L)\ d$.
\end{proof}



\subsection{Compositum Indicium : A Generalization of Degree of Extension}

\label{compositum indicium}

Just like we defined intersection indicium, we define the following notion.

\begin{definition}
    Consider extensions $L/K$ and $M/K$. Let $L_1,L_2,\dots , L_a$ be all the distinct fields inside $M$ isomorphic to $L$ over $K$. Let $L_M=\Pi_{i=1}^a L_i$. We define compositum indicium of $M$ with respect to $L$ (with base field $K$ fixed) as $\gamma_K(M,L)=[L_M:K]$. If none of the fields isomorphic to $L/K$ is contained in $M$, then we define $\gamma_K(M,L)=0$.\smallskip

    Note that $L_M\subset M\cap \tilde{L}$.
\end{definition}

\begin{remark}\label{gamma group theoretic}
    Equivalently $\gamma_K(M,L)=[G:T]$ where $G=Gal(\tilde{L}/K)$ (where $\tilde{L}$ is Galois closure of $L/K$) and $T$ is subgroup of $G$ which is intersection of subgroups $H_1, H_2,\dots, H_a$ which fix $L_1,L_2,\dots,L_a$ respectively.  
\end{remark}

\begin{remark}\label{rem gamma}
    Suppose $M/K$ and $M'/K$ are isomorphic over $K$ and $L/K$ and $L'/K$ are isomorphic over $K$. Then $\gamma_K(M,L)=\gamma_K(M',L')$ (If the isomorphism from $M$ to $M'$ maps $L_1,L_2,\dots,L_a$ to $L_{i_1},L_{i_2},\dots, L_{i_a}$ respectively, then it maps $\Pi_{j=1}^a L_j$ to $\Pi_{j=1}^a L_{i_j}$).
\end{remark}

The following are certain basic properties which demonstrate that compositum indicium is indeed a generalization of degree of extension.

\begin{proposition}\label{gamma prop}
Consider an extension $L/K$.

    \begin{enumerate}
        \item $\gamma_K(L,L)=[L:K]$ and $\gamma_K(\tilde{L},L)=[\tilde{L}:K]$ where $\tilde{L}$ is Galois closure of $L/K$. Also $\gamma_K(L,K)=1$. If $L\neq K$ then $\gamma_K(K,L)=0$.\smallskip

        \item For any extension $M/K$, $[L:K]\ |\ \gamma_K(M,L)$. If $\gamma_K(M,L)\neq 0$ then $\gamma_K(M,L)\ |\ [\tilde{L}:K]$ and $\gamma_K(M,L)\ |\ [M:K]$. 
\end{enumerate}\end{proposition}



    


\begin{proposition}
    
(Compositum Indicium Magnification) \label{gamma magn} Consider $M/L/K$. Suppose $M'/K$ and $L'/K$ are obtained by strong cluster magnification from $M/K$ and $L/K$ respectively through the same $F/K$ with magnification factor $d$. Then $\gamma_K(M',L')=d\ \gamma_K(M,L)$. 
\end{proposition}

\begin{proof}
    By proof of Proposition \ref{root capacity mag}, the distinct fields inside $M'$ isomorphic to $L'/K$ are precisely $\{L_iF\}_{i=1}^a$ where $\{L_i\}_{i=1}^a$ are fields inside $M$ isomorphic to $L/K$. Hence $L_MF=(\Pi_{i=1}^a L_i)F=(\Pi_{i=1}^a L_iF)=L'_{M'}$. Since $\tilde{L}\cap F=K$ we have $L_M\cap F=K$ where $F/K$ is Galois. Thus we have $\gamma_K(M',L')=[L'_{M'}:K]=[L_MF:K]=[L_M:K]\cdot [F:K]=\gamma_K(M,L)\ d$.
\end{proof}



\subsection{Some observations and an important example}

The following connects the concepts of root capacity and intersection indicium and compositum indicium.
    \begin{proposition}\label{tau prop 2} Consider extensions $L/K$ and $M/K$.

    \begin{enumerate}

\item $\rho_K(M,L)=\rho_K(L_M,L),\ \tau_K(M,L)=\tau_K(L_M,L),\ \gamma_K(M,L)=\gamma_K(L_M,L)$.

\item $\rho_K(M,L)=r_K(L)\iff \tau_K(M,L)=[L:K]\iff  \gamma_K(M,L)=[L:K]$.
    
\item $\rho_K(M,L)=[L:K]\implies\tau_K(M,L)=t_K(L)$. The converse is false as demonstrated in Example \ref{tau examples}.

\item  $\rho_K(M,L)=[L:K]\iff \gamma_K(M,L)=[\tilde{L}:K]$. 

\item $\tau_K(M,L) \mid \gamma_K(M,L)$ if $\tau_K(M,L)\neq 0$. The statement $\rho_K(M,L) \mid \gamma_K(M,L)$ is false in general as demonstrated later in Remark \ref{rho not divide}.
        
    \end{enumerate}
        
    \end{proposition}



    

\begin{example}\label{tau examples}
     Consider $L/K$ which has Galois closure $\tilde{L}$ with Galois group as $\mathfrak{S}_n$ for $n>2$. By the argument in Example \ref{unique F example} (2), we have that intersection of any two distinct fields isomorphic to $L/K$ is $K$. Thus for an extension $M/K$, if $\tau_K(M,L)\neq 0$ or $[L:K]$ then $\tau_K(M,L)=1=t_K(L)$.\smallskip

     We also have that if $M$ is compositum of any two distinct fields isomorphic to $L/K$ then $\rho_K(M,L)=2=2\ r_K(L)$ by Example \ref{rho Sn}. Thus $\rho_K(M,L)\neq n=[L:K]$.

\end{example}

Some other properties which are easy to verify.

\begin{proposition} \label{tau prop 3} Consider an extension $L/K$.
    \begin{enumerate}
        \item Consider $M/M'/K$. Then $\tau_K(M,L)\ |\ \tau_K(M',L)$ if $\tau_K(M,L)\neq 0$. Also $\gamma_K(M',L)\ |\ \gamma_K(M,L)$ if $\gamma_K(M',L)\neq 0$.\smallskip

        \item Consider $M/K$ and $K'/K$ such that $K'\subset (L\cap M)$, Then $\tau_K(M,L)\ |\ (\tau_{K'}(M,L)\cdot [K':K])$ if $\tau_K(M,L)\neq 0$. Also $(\gamma_{K'}(M,L)\cdot [K':K])\ |\ \gamma_{K}(M,L)$ if $\gamma_{K'}(M,L)\neq 0$.

    \end{enumerate}
\end{proposition}

 The following result is an important example.

\begin{proposition}\label{tau interesting eg}

    Consider the case in Example \ref{unique F example} (1) for $n$ odd. Let $L=\mathbb{Q}(a)$ and $M=\mathbb{Q}(a, \zeta^l)$ for an $l|n$. Then $\rho_{\mathbb{Q}}(M,L)=n/l$ and $\tau_{\mathbb{Q}} (M,L)=l$ and thus $\tau_{\mathbb{Q}} (M,L)\rho_{\mathbb{Q}} (M,L)=[L:\mathbb{Q}]$. Also $\gamma_{\mathbb{Q}}(M,L)=n\phi(n/l)$ where $\phi$ is the Euler totient function.
\end{proposition}

\begin{proof}

    

Suppose $\mathbb{Q}(a\zeta^i)\subset M$. Then $\zeta^i\in \mathbb{Q}(a,\zeta^l)$ and so $\zeta^i\in \mathbb{Q}(a,\zeta^l)\cap \mathbb{Q}(\zeta)$. Since $\mathbb{Q}(a)\cap \mathbb{Q}(\zeta)=\mathbb{Q}$. Thus $\mathbb{Q}(a,\zeta^l)\cap \mathbb{Q}(\zeta)=\mathbb{Q}(\zeta^l)$ and so $\zeta^i\in \mathbb{Q}(\zeta^l)$. Let $d=gcd(i,l)$. Then $\zeta^d\in \mathbb{Q}(\zeta^l)$. Now we have $[\mathbb{Q}(\zeta^l):\mathbb{Q}]=\phi(n/l)$. Since $d|l$, we have $d|n$. Thus $[\mathbb{Q}(\zeta^d):\mathbb{Q}]=\phi(n/d)$. Since $\mathbb{Q}(\zeta^d)\subset \mathbb{Q}(\zeta^l)$. Thus $\phi(n/d)\ |\ \phi(n/l)$. Since $d|l$ we have $(n/l) | (n/d)$. Thus $\phi(n/l)\ |\ \phi(n/d)$ which implies that $\phi(n/d)= \phi(n/l)$. Since $\phi(n/d)= \phi((n/d)/(l/d))$ and $n$ is odd. Thus $n/d=n/l$ that is $d=l$. Thus we have $l|i$ (Refer Proposition 5.1.2 in \cite{Bhagwat_2025} for a list of properties of $\phi$ that we have used here). Thus the distinct fields inside $M$ isomorphic to $L/\mathbb{Q}$ are precisely $\{\mathbb{Q}( a\zeta^{yl})\}_{y=0}^{(n/l -1)}$. Since $n$ is odd, we have $r_{\mathbb{Q}}(L)=1$. Thus $\rho_{\mathbb{Q}} (M,L)=n/l$.

\smallskip

Now let $F'$ be intersection of all these distinct fields inside $M$ isomorphic to $L/\mathbb{Q}$. Then we have $\tau_{\mathbb{Q}}(M,L)=[F':K]$. Clearly $a^{n/l}\in F'$. We will show that $F'=\mathbb{Q}(a^{n/l})$. Suppose $\gamma\in F'$. So $\gamma\in \mathbb{Q}(a)\cap \mathbb{Q}( a\zeta^{l})$. Hence $\gamma=a_0 +a_1 a + \dots + a_{n-1} a^{n-1}=b_0 +b_1 (a\zeta^{l}) + \dots + b_{n-1} (a\zeta^{l})^{n-1}$ for $a_j,b_j\in \mathbb{Q}$ for all $0\leq j\leq n-1$. By arguments as in Example \ref{unique F example} (1), we have for all $0\leq j\leq n-1$ that $a_j=b_j (\zeta^{l})^j$. Since $n$ is odd, $a_j\neq 0$ implies $n\ |\ (lj)$ that is $(n/l)\ |\ j$. Thus if $a_j\neq 0$ then $a^j\in \mathbb{Q}(a^{n/l})$ and thus $\gamma\in \mathbb{Q}(a^{n/l})$. Since $l|n$, we have $\tau_{\mathbb{Q}}(M,L)=[\mathbb{Q}(a^{n/l}):K]=l$. Now $L_M=\Pi_{y=0}^{(n/l-1)}\ \mathbb{Q}(a\zeta^{yl})$. Thus $L_M=\mathbb{Q}(a, \zeta^l)=M$. Thus $\gamma_{\mathbb{Q}}(M,L)=[L_M:\mathbb{Q}]=[\mathbb{Q}(a, \zeta^l):\mathbb{Q}]$. Since $\mathbb{Q}(a)\cap \mathbb{Q}(\zeta^l)=\mathbb{Q}$ and $\mathbb{Q}(\zeta^l)$ is Galois. Thus $\gamma_\mathbb{Q}(M,L)=[\mathbb{Q}(a):\mathbb{Q}]\cdot [\mathbb{Q}(\zeta^l):\mathbb{Q}]=n\phi(n/l)$.\end{proof}

\begin{remark}
    Proposition \ref{tau interesting eg} is a generalization of Proposition 2.4 (3) in \cite{jaiswal2025minimal}.
\end{remark}

\begin{remark}
    For extensions $L/K$ and $M/K$, the statement $\tau_K(M,L)\rho_K (M,L)=[L:K]$ is not true in general which is demonstrated by the case in Example \ref{tau examples}.
\end{remark}

\subsection{Inverse Root Capacity Problem}\label{Inverse Root Capacity Problem}
We begin by stating the following lemma.

\begin{lemma}[Lemma 3.1.4, \cite{Bhagwat_2025}]\label{3.1.4} 

Let $K$ be a perfect field and $K'/K$ be a finite extension. If for a group $G$, direct products $G^n$ are realisable as Galois group over $K$ for each $n\in \mathbb{N}$ then they are realisable as Galois groups over the field $K'$.
    
\end{lemma}

The following lemma follows from the proof in \cite{Bhagwat_2025} of Lemma \ref{3.1.4}  (essentially the proof of Theorem 4.2 in \cite{conrad2023galois}). We provide the proof for the sake of completeness.

\begin{lemma}\label{arbitrary lemma}
     Suppose for a group $G$, direct product $G^m$ is realizable as a Galois group over $K$ for an $m\in \mathbb{N}$. Then we get $m$-many Galois extensions of $K$ inside $\bar{K}$ which are pairwise non-isomorphic over $K$  and are pairwise linearly disjoint over $K$ with each having Galois group $G$ over $K$. \end{lemma}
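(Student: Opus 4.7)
The plan is to use the Galois correspondence on $G^m$ directly. Write $E/K$ for a Galois extension with $\mathrm{Gal}(E/K) \cong G^m$, and for each $i \in \{1, \dots, m\}$ define the normal subgroup
\[ N_i = \{(g_1, \dots, g_m) \in G^m : g_i = e\} \cong G^{m-1}, \]
with quotient $G^m/N_i \cong G$. Let $E_i := E^{N_i}$. By the fundamental theorem of Galois theory, each $E_i/K$ is Galois with $\mathrm{Gal}(E_i/K) \cong G^m / N_i \cong G$, so the Galois group condition and the existence of $m$ extensions inside $\bar{K}$ are handled immediately.

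Next I would verify pairwise linear disjointness. For $i \neq j$, any tuple $(g_1,\dots,g_m) \in G^m$ factors as $a \cdot b$ with $a \in N_i$ and $b \in N_j$: take $b$ to be the tuple equal to $g_i$ in the $i$-th coordinate and $e$ elsewhere, and $a$ to be the tuple equal to $g_k$ in coordinate $k \neq i$ and $e$ in coordinate $i$; note $a \in N_i$ automatically, and $b \in N_j$ because $j \neq i$. Hence $N_i N_j = G^m$, so by the Galois correspondence $E_i \cap E_j = E^{N_i N_j} = E^{G^m} = K$. Since $E_i$ and $E_j$ are both Galois over $K$, the condition $E_i \cap E_j = K$ is equivalent to $E_i$ and $E_j$ being linearly disjoint over $K$.

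For the pairwise non-isomorphism, I would argue as follows. If $G$ is trivial, the statement is vacuous, so assume $|G|>1$. Then $[E_i:K] = |G| > 1$, so $E_i \neq K$, and combined with $E_i \cap E_j = K$ this forces $E_i \neq E_j$ whenever $i \neq j$. But two distinct Galois extensions of $K$ inside a fixed algebraic closure $\bar{K}$ are necessarily non-isomorphic over $K$: any $K$-embedding of a Galois extension into $\bar{K}$ must have image equal to that extension itself. Hence $E_1,\dots,E_m$ are pairwise non-isomorphic over $K$.

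I do not expect any real obstacle; the only subtle ingredient is the equivalence between $E_i \cap E_j = K$ and linear disjointness for pairs of Galois extensions, together with the observation that distinct Galois subfields of $\bar{K}$ cannot be $K$-isomorphic. Everything else is a direct transcription of the direct-product structure of $G^m$ under the Galois correspondence.
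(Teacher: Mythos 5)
Your proof is correct and follows essentially the same route as the paper: both take a Galois extension $E/K$ with $\mathrm{Gal}(E/K)\cong G^m$, pass to the normal subgroups $N_i$ that kill the $i$-th coordinate, and read off the $E_i=E^{N_i}$. The only cosmetic difference is that the paper deduces non-isomorphism directly from the observation that the $N_i$ are distinct (hence, being normal, non-conjugate), while you derive it via $E_i\cap E_j=K$ together with $E_i\neq K$; both are valid, and your explicit verification of $N_iN_j=G^m$ makes the linear-disjointness step fully transparent.
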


\begin{proof}
    Now $G^m$ is realizable over $K$, say for $E/K$ Galois, we have ${\rm Gal}(E/K)\cong G^m$. We have normal
subgroups $N_i = G \times G \times \dots \times {1} \times \dots \times G$ of $G^m$ for $1 \leq i \leq m$ where the $i$th coordinate is trivial and there is no restriction in other
coordinates. So $N_i \cong G^{m-1}$. Let $E_i$ be the subfield of $E$ corresponding to
$N_i$, so $E_i/K$ is Galois with ${\rm Gal}(E_i/K) \cong G^m / N_i \cong G$.\smallskip

We observe that $N_i$ are not conjugate to each other in $G^m$ and they pairwise generate $G^m$. Hence $E_i$ are not isomorphic to each other over $K$ and are pairwise linearly disjoint over $K$.
 \end{proof}




\begin{proof}[Proof of Theorem \ref{inverse cluster size improv}]
We proceed in the same way as the author and Bhagwat proceeded in the proof of Theorem \ref{n,r} in \cite{Bhagwat_2025}. Suppose $r=1$. By Lemma \ref{Sn}, there exist infinitely many pairwise linearly disjoint extensions over $K$ of degree $n$ with Galois closures having Galois group ${\mathfrak S}_{n}$ over $K$. Hence we obtain infinitely many extensions of degree $n$ which are pairwise non-isomorphic and pairwise linearly disjoint over $K$ and these extensions have cluster size 1.\smallskip

Suppose $r>1$. We have that there exists $L/K$ with $G=Gal(\tilde{L}/K)$ solvable and $[L:K]=n$ and $r_K(L)=r$. Consider subgroup $H=Gal(\tilde{L}/L)\subset G$. Since direct product of solvable groups is solvable, direct products $G^m$ for $m\in \mathbb{N}$ are solvable. By Shafarevich's theorem (\cite{Shafarevich1989}), $G^m$ for $m\in \mathbb{N}$ are realizable as Galois groups over $\mathbb{Q}$. Hence by Lemma \ref{3.1.4}, $G^m$ for $m\in \mathbb{N}$ are realizable as Galois groups over number field $K$.\smallskip

By Lemma \ref{arbitrary lemma}, for any $m\in \mathbb{N}$, we get $E_i$ for $1\leq i\leq m$ which are pairwise non-isomorphic over $K$  and are pairwise linearly disjoint over $K$ with each having Galois group $G$ over $K$. Thus for $1\leq i\leq m$, we have $E_i^H$ (which correspond to subgroups $G \times G \times \dots \times H \times \dots \times G$ of $G^m$ where the $i$th coordinate is an element of $H$ and there is no restriction in other
coordinates) which are pairwise non-isomorphic over $K$  and are pairwise linearly disjoint over $K$ with each having Galois closures with Galois group $G$ over $K$. Each $E_i^H/K$ has degree $n$ and cluster size $r$.\end{proof}

\begin{remark}
    In the above proof, for any given $m\in\mathbb{N}$, we get $m$ many extensions over $K$, with degree $n$ and cluster size $r$, which are pairwise non-isomorphic over $K$. Thus for any given $m$, we get $mn/r$ many distinct extensions over $K$, with degree $n$ and cluster size $r$.
\end{remark}

\begin{remark}
The cases $r=1$ for $n$ odd and $r=2$ for $n$ even have other obvious isomorphism classes too distinct from the ones obtained in above theorem. Namely the ones considered in Example \ref{unique F example} (1) and Theorem \ref{inv min gen xn} (1) where Galois group of Galois closure of extension is $\mathbb{Z}/n \mathbb{Z} \rtimes (\mathbb{Z}/n \mathbb{Z})^{\times}$. Observe that even this group is solvable, so a similar story unfolds.
\end{remark}



\begin{remark}\label{remark inv root cap}
   Assuming the condition $\rho\neq n-1$ is necessary in Theorem \ref{inv root cap}. Suppose $L/K$ is degree $n$ extension. Then there doesn't exist $M/K$ such that $\rho_K (M,L)=n-1$. Assume on the contrary that such $M/K$ exists. Since $r_K(L)|\rho_K (M,L)$ and $r_K(L)| n$. Thus $r_K(L)=1$ and $s_K(L)=n$. Let $L=K(\alpha)$ and $f$ be minimal polynomial for $\alpha$ over $K$. Since $\rho_K (M,L)=n-1$, $M$ contains $n-1$ roots of $f$. Since sum of all roots of $f$ lies in $K$. Hence $M$ contains the $n$-th root as well which is a contradiction.
\end{remark}
\smallskip

By results in \cite{volklein1996groups} on hilbertian fields and by the final proposition in \cite{perlisroots}, we have the following.

\begin{lemma}\label{Sn}
Let $K$ be a number field and $n\in \mathbb{N}$. Then we have ${\mathfrak S}_{n}$ to be realizable as a Galois group for infinitely many pairwise linearly disjoint Galois extensions over $K$. Thus there exist infinitely many pairwise linearly disjoint extensions over $K$ of degree $n$ with Galois closures having Galois group ${\mathfrak S}_{n}$ over $K$.
\end{lemma}

\begin{lemma}[Lemma 3, \cite{krithika2023root}]\label{lemma 3 in vanchi}

Let $K$ be a number field and $L/K$ be a finite extension. For any positive integer
$d \geq 2$, there exist infinitely many cyclic Galois extensions $L'/K$ of degree $d$ such that $L$ and $L'$ are linearly
disjoint over $K$.
    
\end{lemma}

\begin{proof}[Proof of Theorem \ref{inv root cap}]

We discuss two approaches to prove the result.\smallskip

The First Approach: This uses Proposition \ref{root capacity mag} but excludes the cases (1) $n=2r$ and (2) $\rho=n-r$ for $n>2r$.\smallskip

  Let $n/r=s$ and $\rho/r = a$ and let $s\neq2$ and $a\neq s-1$. By Lemma \ref{Sn}, there exists an irreducible polynomial $f$ over $K$ of degree $s$ with Galois group ${\mathfrak S}_s$. This $f$ satisfies $r_K(f)=1$. Let roots of $f$ be $\alpha_i$ for $1\leq i\leq s$. For $1 \leq k \leq s-1$, let $L_k=K(\alpha_1,\dots, \alpha_k)$. As noted in Example 6.2.7 in \cite{Bhagwat_2025}, $r_K(L_1)=1$ and $[L_1:K]=s$ and $L_{s-1}$ is Galois closure of $L_1/K$ and $\rho_K(L_k,L_1)=k$ for $1\leq k\leq (s-2)$ and $\rho_K(L_{s-1},L_1)=s$.\smallskip

By Lemma \ref{lemma 3 in vanchi}, there exists $F/K$ Galois of degree $r$ such that $L_{s-1}$ and $F$ are linearly disjoint over $K$. Let $L=L_1 F$. By Proposition \ref{cluster magnification}, $[L:K]=rs=n$ and $r_K(L)=r$. Then for $a=0$, $M=K$ works. Let $M=L_a F$ for $a\neq 0, s$ and let $M=L_{s-1} F$ for $a=s$. By Proposition \ref{root capacity mag}, $\rho_K(M,L)=r\  \rho_K (L_a, L_1)= ar=\rho$ for $a\neq 0,s$ and $\rho_K(M,L)=r\ \rho_K(L_{s-1}, L_1)=sr=\rho$ for $a=s$. \smallskip

The Second Approach: This gives a complete answer.

\smallskip

 For $r=1$ we use the first approach itself. For $r>1$, we proceed as we did in the proof of Theorem \ref{n,r} in \cite{Bhagwat_2025}. Thus we have that there exists an extension $L/K$ with Galois closure $\tilde{L}$ such that $G=Gal(\tilde{L}/K)$ is solvable such that action of the group is transitive on $n$ points, and a point stabiliser fixes precisely $r$ points. We have $G=(\Z/r\Z)^s \rtimes \Z / s\Z$ with semidirect product group law \[ ((a_1,\dots, a_s),b)\cdot((c_1,\dots,c_s),d)=((a_1,\dots, a_s) + (b\cdot (c_1,\dots, c_s)), b+d), \]
  where $b\cdot(c_1,\dots, c_s)=(c_{b+1},\dots, c_s,c_1,\dots, c_{b})$ for $b\neq 0$ \&  $0\cdot(c_1,\dots, c_s)=(c_1,\dots, c_s)$. \smallskip
  
  One can verify that $((a_1,\dots,a_s),b)^{-1}=((-a_{s-b+1},\dots,-a_{s}, -a_{1},\dots, -a_{s-b}),-b)$ and \[((a_1,\dots, a_s),b)\cdot((c_1,\dots,c_{s-1},0),0)\cdot ((a_1,\dots,a_s),b)^{-1}= ((c_{b+1},\dots, c_{s-1}, 0, c_1,\dots,c_b),0).\]

 Any point stabiliser is isomorphic to $(\Z/r\Z)^{s-1}$. We have $[L:K]=n$ and $r_K(L)=r$. The $s=s_K(L)$ many subgroups of $G$ fixing the $s$ many distinct fields $L_i$'s isomorphic to $L/K$ are $H_i=((\Z/r\Z)^{i-1}\times 0 \times (\Z/r\Z)^{s-i} )\times 0$ for $1\leq i\leq s$. Observe that \[ G \supsetneq H_1 \supsetneq H_1 \cap H_2 \supsetneq \dots \supsetneq H_1 \cap H_2 \cap \cdots \cap H_s = 0.\] 

 Thus by the group theoretic formulation for cluster towers in Section 5.2 in \cite{Bhagwat_2025}, we have \[ K \subseteq L_1 \subseteq L_1 L_2 \subseteq \dots \subseteq L_1 L_2 \cdots L_s = \tilde{L}.\] is a cluster tower for $L/K$ of length $s$. Hence we are done by Proposition \ref{tower capacity}.
\end{proof}

\begin{remark}
    Consider the case in the first approach above. Now $L/K$ is obtained by strong cluster magnification from $K(\alpha_1)/K$ through $F/K$. Thus from the alternate proof of cluster magnification theorem in Section 8 in \cite{Bhagwat_2025}, we have $s_K(L)=s_K(K(\alpha_1))=s$ and $K(\alpha_i) F$ for $1\leq i\leq s$ are the $s$ many distinct fields isomorphic to $L$ over $K$. Hence we have by Proposition \ref{SCM cluster tower} that \[ K \subseteq L_1 F \subseteq L_2 F \subseteq \dots \subseteq L_{s-1} F = \tilde{L}.\] is a cluster tower for $L/K$ of length $s-1$ where $L_k=K(\alpha_1,\dots, \alpha_k)$ for $1 \leq k \leq s-1$. The degree sequence of the cluster tower is $(n, n\ ^{(s-1)} P_1,n\ ^{(s-1)} P_2,\dots, n\ ^{(s-1)} P_{(s-2)})$. The degree sequence and length of tower are independent of the ordering of the $K(\alpha_i) F$’s.
\end{remark}

\begin{remark} \label{rem for min gen set}
    In the second approach above we can compute the degree sequence of the considered cluster tower for $L/K$ which turns out to be $(n,nr, nr^2, \dots , nr^{s-1})$. It is easy to see that degree sequence and length of tower are independent of the ordering of the $L_i$’s.
\end{remark}

\begin{remark}
\label{conv countereg}

  For $1<r\leq n/3$ and $r|n$, consider the field $L/K$ in second approach above. Thus $3\leq s=n/r<n$. For $1\leq a\leq s-1$, let $M_a=L_1 L_2 \cdots L_a$. Now Galois closure of $M_a/K$ is $\tilde{L}$. Thus for $2\leq a \leq s-1$, we have that $M_a/K$ is not obtained by strong cluster magnification from $M_1/K$.\smallskip

  Now for $1\leq a\leq s-1$, the subgroup of $G$ fixing $M_a$ is $G_a= H_1\cap H_2 \cap \cdots \cap H_a$. We have the notions of unique descending chains and unique ascending chains for extensions introduced in Section 7 in \cite{Bhagwat_2025}. By using the group law we can show that for $1\leq a\leq s-1$, $N_G(G_a)=G_a^G=(\Z/r\Z)^s\times \{0\}$ where $G_a^G$ is the normal closure of $G_a$ in $G$. \smallskip

  Hence for any $1\leq a \leq s-1$ the unique descending chain for $M_a/K$ is $M_a\supsetneq \tilde{L}^{(\Z/r\Z)^s\times \{0\}} \supsetneq K$ and the unique ascending chain for $M_a/K$ is $K\subsetneq \tilde{L}^{(\Z/r\Z)^s\times \{0\}} \subsetneq M_a$ and both the unique chains coincide. Thus $r_K(M_a)=[N_G(G_a):G_a]=[G_a^G :G_a]=u_K(M_a)=r^s/r^{s-a}=r^a$ and\\
  $s_K(M_a)=[G:N_G(G_a)]=[G:G_a^G]=t_K(M_a)=s$. 
  
  \smallskip

Thus we have for $2\leq a\leq s-1$ that $M_a/K$ and the subextension $M_1/K$ serve as a counterexample for the converse of Theorem 8.4.1 in \cite{Bhagwat_2025}.
\end{remark}

\begin{remark}\label{rho not divide}
  For extensions $L/K$ and $M/K$ with $\rho_K(M,L)\neq 0$, it is not necessary that we have $\rho_K(M,L)\ |\ [M:K]$. Thus it is not even necessary that $\rho_K(M,L)\ |\  [L:K]$ and $\rho_K(M,L)\ |\  \gamma_K(M,L)$.\smallskip

  For example consider the case in Remark \ref{conv countereg}. Then $\rho_K(M_a,L)=a\ r_K(L)=ar$ and $[M_a:K]=r^a\ s$. Thus $\rho_K(M_a,L)\ |\ [M_a:K]$ if and only if $a|r^{a-1}s$ which is not always true.
\end{remark}

An improvement on Inverse root capacity problem for number fields Theorem \ref{inv root cap} (Similar improvement as in Theorem \ref{inverse cluster size improv}): Let $K$ be a number field. Given $(n,r,\rho)$ where $n>2$ and $r|n$ and $r|\rho$ and $\rho\neq n-1$. We get arbitrarily large finite families of degree $n$ extensions $L/K$ inside $\bar{K}$ with cluster size $r$, which are pairwise non-isomorphic over $K$ and are pairwise linearly disjoint over $K$, for which we have extensions $M/K$ such that $\rho_K (M,L)=\rho$.\smallskip

For $\rho\neq 0$, we get $M/K$ as an extension of $L/K$ contained in $\tilde{L}$. Thus extensions $M/K$ corresponding to extensions $L/K$ which are pairwise non-isomorphic over $K$ and pairwise linearly disjoint over $K$ are themselves pairwise non-isomorphic over $K$ and pairwise linearly disjoint over $K$.

\subsection{Inverse Intersection Indicium Problem}
\label{inv tau section}
\hfill

An improvement on Theorem \ref{t,r} (Similar improvement as in Theorem \ref{inverse cluster size improv}): Let $K$ be a number field. Let $n>2$ and $t|n$. Then we get arbitrarily large finite families of extensions $L/K$  inside $\bar{K}$ which are pairwise non-isomorphic over $K$ and are pairwise linearly disjoint over $K$ and each having degree $n$ with ascending index $t_K(L)=t$. 
\smallskip


    Example \ref{perlis tau} below demonstrates that we can't directly use the second approach used for proving Inverse root capacity problem Theorem \ref{inv root cap} to solve the Inverse intersection indicium problem.

\begin{example}\label{perlis tau}

Let $K$ be a number field and $n>2$ and $t|n$ with $t>1$. Let $r=n/t$ for $t<n$ and $r=n$ for $t=n$. By the proof of Theorem \ref{t,r} in \cite{Bhagwat_2025}, we have existence of an extension $L/K$ with $[L:K]=n$ and $t_K(L)=t$ and $r_K(L)=r$ which is the same as the field extension $L/K$ in second approach of proof of Theorem \ref{inv root cap}. Consider the notations as in that proof above.\smallskip

If $L_i$ and $L_j$ are two distinct fields isomorphic to $L/K$ then the subgroups $H_i$ and $H_j$ fixing them clearly generate the subgroup $(\Z/r\Z)^s\times \{0\}$ which is same as $H^G$ as shown in Theorem 7.3.4 in \cite{Bhagwat_2025}. Thus for an extension $M/K$, if $\tau_K(M,L)\neq 0$ or $[L:K]$ then $\tau_K(M,L)=t=t_K(L)$. In particular, for $L/K$ non-Galois ($t<n$) and $2\leq a\leq s$, we have $\tau_K(L_1 L_2\cdots L_a,L)=t=t_K(L)$.

\end{example}

Similarly one can see in the light of Example \ref{tau examples}, that the first approach of Theorem \ref{inv root cap} is also not directly useful to solve the problem.

\smallskip

We would need the following result which is a slightly generalized version of Proposition \ref{tau magn}. The hypothesis is inspired from Lemma 2 and the proof of the Main Theorem in \cite{krithika2025inflated}.

\begin{lemma} \label{inv tau lemma}
    Consider $M/L/K$. Consider $N/J/K$ such that $\tilde{M}\cap \tilde{N}=K$. Then we have 
\begin{enumerate}
\item $[LJ:K]=[L:K][J:K]$ and $r_K(LJ)=r_K(L)r_K(J)$ and $t_K(LJ)=t_K(L)t_K(J)$ and $\rho_K(MN,LJ)=\rho_K(M,L)\rho_K(N,J)$ and $\tau_K(MN,LJ)= \tau_K(M,L)\tau_K(N,J)$ and $\gamma_K(MJ,LJ)=\gamma_K(M,L)\gamma_K(N,J)$.

\smallskip

\item $\rho_K(MJ,LJ)=r_K(J)\rho_K(M,L)$ and $\tau_K(MJ,LJ)=[J:K]\ \tau_K(M,L)$ and $\gamma_K(MJ,LJ)=[J:K]\gamma_K(M,L)$.\smallskip

\item  $\rho_K(\tilde{L}J,LJ)=r_K(J)[L:K]$ and $\tau_K(\tilde{L}J,LJ)=[J:K]\ t_K(L)$ and $\gamma_K(\tilde{L}J,LJ)=[\tilde{L}J:K]$.

\end{enumerate}
    
\end{lemma}

\begin{proof}
   We will prove part (1). Since $\tilde{M}\cap \tilde{J}=K$, we have $\tilde{L}\cap\tilde{J}=K$. Let $G = {{\rm Gal}}(\tilde{L}/K)$, $H= {{\rm Gal}}(\tilde{L}/L)$, $S= {{\rm Gal}}(\tilde{J}/K)$ and $T = {{\rm Gal}}(\tilde{J}/J)$. Let $G'=Gal(\tilde{L}\tilde{J}/K)$ and $H'=Gal(\tilde{L}\tilde{J}/LJ)$. We can identify $G'$ with $G\times S$. Thus $H'=(H\times S)\cap (G\times T)=H\times T$ and hence $[LJ:K]=[L:K][J:K]$. Also $N_{G'}(H')= N_G(H)\times N_S(T)$. Thus $s_K(LJ)=[G':N_{G'}(H')]=s_K(L)s_K(J)$. Thus the $s_K(LJ)$ many distinct fields isomorphic to $LJ/K$ are precisely $L_iJ_j$ for $1\leq i\leq s_K(L)$ and $1\leq j\leq s_K(J)$ where $L_i$'s are all the distinct fields isomorphic to $L/K$ and $J_j$'s are all the distinct fields isomorphic to $J/K$. We also have $r_K(LJ)=r_K(L)r_K(J)$. We can also show that $H'^{G'}= H^G \times T^S$. Hence $t_K(LJ)=[G': H'^{G'}]=t_K(L) t_K(J)$.\smallskip

Since $\tilde{M}\cap \tilde{N}=K$. Hence $MN\cap \tilde{N}=N$ and $MN\cap \tilde{M}=M$.
Now suppose $L_i J_j\subset MN$. Thus $J_j\subset MN$. Hence $J_j\cap \tilde{N}\subset MN\cap \tilde{N}$. Since $J_j\subset \tilde{N}$ and $MN\cap \tilde{N}=N$. Therefore $J_j\subset N$. Similarly $L_i\subset M$. Thus the distinct fields inside $MN$ isomorphic to $LJ/K$ are precisely $\{L_iJ_j\}_{1\leq i\leq a, 1\leq j\leq b}$ where $\{L_i\}_{i=1}^a$ are fields inside $M$ isomorphic to $L/K$ and $\{J_j\}_{j=1}^b$ are fields inside $N$ isomorphic to $J/K$. Hence $\rho_K(MN,LJ)/r_K(LJ)=(\rho_K(M,L)/r_K(L))(\rho_K(N,J)/r_K(J))$. Thus $\rho_K(MN,LJ)=\rho_K(M,L)\rho_K(N,J)$. \smallskip

Now $Gal(\tilde{L}\tilde{J}/\tilde{L})=1\times S$ and $Gal(\tilde{L}\tilde{J}/\tilde{J})=G\times 1$. If $H_i= Gal(\tilde{L}/L_i)$ and $T_j= Gal(\tilde{J}/J_j)$ then $H_i\times S=Gal(\tilde{L}\tilde{J}/L_i)$ and $G\times T_j=Gal(\tilde{L}\tilde{J}/J_j)$. Thus $Gal(\tilde{L}\tilde{J}/(L_i J_j))=(H_i\times S)\cap (G\times T_j)=H_i\times T_j$. Hence $Gal(\tilde{L}\tilde{J}/(\cap_{1\leq i\leq a, 1\leq j\leq b}\ L_iJ_j))$ is generated by subgroups $\{H_i\times T_j\}_{1\leq i\leq a, 1\leq j\leq b}$ which is $H'\times T'$ where $H'$ is subgroup of $G$ generated by subgroups $\{H_i\}_{i=1}^a$ and $T'$ is subgroup of $S$ generated by subgroups $\{T_j\}_{j=1}^b$. Also $Gal(\tilde{L}\tilde{J}/(\cap_{i=1}^a L_i))=H'\times S$ and $Gal(\tilde{L}\tilde{J}/(\cap_{j=1}^b J_j))=G\times T'$. Since $(H'\times S)\cap (G\times T')=H'\times T'$. Hence $(\cap_{i=1}^a L_i)(\cap_{j=1}^b J_j)=(\cap_{1\leq i\leq a, 1\leq j\leq b}\ L_iJ_j)$. Thus $\tau_K(MJ,LJ)=\tau_K(M,L)\tau_K(N,J)$. Finally, $\Pi_{1\leq i\leq a, 1\leq j\leq b}\ L_i J_j=(\Pi_{i=1}^a\ L_i)(\Pi_{j=1}^b\ J_j)$. Thus $\gamma_K(MJ,LJ)=\gamma_K(M,L)\gamma_K(N,J)$.\end{proof}

\smallskip

\begin{lemma}
\label{lin disj}
Given $L/K$.

\begin{enumerate}
    \item If for a group $G$, direct products $G^n$ are realisable as Galois group over $K$ for each $n\in \mathbb{N}$ then we have a Galois extension $L'/K$ such that $Gal(L'/K)=G$ and $L'\cap L=K$ that is $L'$ and $L$ are linearly disjoint over $K$.

    \item If $K$ is a number field and $G$ is a solvable group then we have a Galois extension $L'/K$ such that $Gal(L'/K)=G$ and $L'\cap L=K$.

    \item  If $K$ is a number field then for any $n\in \mathbb{N}$, we have an extension $L'/K$ such that $Gal(L'/K)=\mathfrak{S}_n$ and $L'\cap L=K$.
\end{enumerate}
    
\end{lemma}

\begin{proof}\hfill

\begin{enumerate}
    \item Follows from the proof in \cite{Bhagwat_2025} of Lemma \ref{3.1.4}  (essentially the proof of Theorem 4.2 in \cite{conrad2023galois}).\smallskip

    \item Since direct product of solvable groups is solvable, the $n$-fold direct products $G^n$ for $n\in \mathbb{N}$ are solvable. By Shafarevich's theorem (\cite{Shafarevich1989}), $G^n$ for $n\in \mathbb{N}$ are realizable as Galois groups over $\mathbb{Q}$. By Lemma \ref{3.1.4} they are realisable over number field $K$. By part (1) we are done.\smallskip

    \item  The proof is similar to proof of Part (a) of Main theorem in \cite{krithika2025inflated} but we provide it nonetheless. By Lemma \ref{Sn}, there are infinitely many pairwise linearly disjoint extension $L'/K$ such that $Gal(L'/K)=\mathfrak{S}_n$. Call the infinite set of these extensions as $\mathfrak S$.

\smallskip

Since $L/K$ is separable, it has finitely many intermediate extensions. Consider the subset $\mathfrak S'$ of $\mathfrak S$ consisiting of those $L'/K$ which contain atleast one non-trivial intermediate extension of $L/K$. Due to pairwise linear disjointness, $\mathfrak S'$ is finite. Thus we are done.
    
    \end{enumerate}\end{proof}


\begin{proof}[Proof of Theorem \ref{inv tau}]\hfill

Case 1 : Suppose $\tau=t$ or $\tau=n$. Thus by Theorem \ref{t,r} for $n>2$ with $t|n$, we have an extension $L/K$ with $[L:K]=n$ and $t_K(L)=t$. Thus for $M=\tilde{L}$ we have $\tau_K(M,L)=t_K(L)=t$ and for $M=L$ we have $\tau_K(M,L)=[L:K]=n$. \smallskip

 Case 2 : Suppose $n/\tau>2$ and $\tau>2$. Then by Lemma \ref{Sn}, we have an extension $L'/K$ of degree $n/\tau$ such that its Galois closure $\tilde{L'}$ has Galois group $\mathfrak{S}_{n/\tau}$ over $K$. Thus by Theorem 7.3.6 (2) in \cite{Bhagwat_2025} we have $t_K(L')=1$ and $\tau_K(\tilde{L'},L')=t_K(L')=1$. Since we have $t|\tau$ and $\tau>2$. Thus by Theorem \ref{t,r}, we have $J/K$ with $[J:K]=\tau$ and $t_K(J)=t$. Now by Lemma \ref{lin disj} we can choose the fields such that $\tilde{L'}\cap \tilde{J}=K$.
   Let $L=L'J$ and $M=\tilde{L'}J$. Thus by Lemma \ref{inv tau lemma} we have $[L:K]=[L':K][J:K]=(n/\tau)\cdot \tau=n$ and $t_K(L)=t_K(L')t_K(J)=1\cdot t=t$ and $\tau_K(M,L)=[J:K]t_K(L')=\tau\cdot1=\tau$.\smallskip

Case 3 : Suppose $n=2\tau$ and $t>1$ and $\tau/t>2$. Then we have an extension $J/K$ of degree $\tau/t$ such that its Galois closure $\tilde{J}$ has Galois group $\mathfrak{S}_{\tau/t}$ over $K$. Thus $t_K(J)=1$. Since we have $t|2t$ and $t>1$. Thus by Theorem \ref{t,r}, we have $L'/K$ with $[L':K]=2t$ and $t_K(L')=t$. Also $\tau_K(\tilde{L'},L)=t$. Now by Lemma \ref{lin disj} we can choose the fields such that $\tilde{L'}\cap \tilde{J}=K$.
   Let $L=L'J$ and $M=\tilde{L'}J$. Thus by Lemma \ref{inv tau lemma} we have $[L:K]=[L':K][J:K]=(2t)\cdot (\tau/t)=2\tau=n$ and $t_K(L)=t_K(L')t_K(J)=t\cdot 1=t$ and $\tau_K(M,L)=[J:K]t_K(L')=(\tau/t)\cdot t=\tau$.\smallskip

   Case 4 : Suppose $n=2\tau$ and $\tau=2t$ and $t=2m$ with $m>1$. So $n=8m$ and $\tau=4m$. By Theorem \ref{t,r}, we have extension $L'/K$ with $[L':K]=4$ and $t_K(L')=2$ and extension $J/K$ with $[J:K]=2m$ and $t_K(J)=m$. Now by Lemma \ref{lin disj} we can choose the fields such that $\tilde{L'}\cap \tilde{J}=K$.
   Let $L=L'J$ and $M=\tilde{L'}J$. Thus by Lemma \ref{inv tau lemma} we have $[L:K]=[L':K][J:K]=4\cdot 2m=8m=n$ and $t_K(L)=t_K(L')t_K(J)=2\cdot m=2m=t$ and $\tau_K(M,L)=[J:K]t_K(L')=2m\cdot 2=4m=\tau$.
   
\medskip

   Case 5 : $n=8,\tau=4,t=2$. First let $K=\mathbb{Q}$. Consider the polynomial $x^8-3$ which is irreducible over $\mathbb{Q}$ having a solvable Galois group by results in \cite{jacobson1990galois}. Let $a=3^{1/8}$ be the positive real root of the polynomial and $\iota$ be a primitive $4$-th root of unity. Let $L=\mathbb{Q}(a)$ and $M=\mathbb{Q}(a,\iota)$. Thus $\tau_{\mathbb{Q}}(M,L)=[\mathbb{Q}(a)\cap \mathbb{Q}(ai):\mathbb{Q}]=[\mathbb{Q}(a^2):\mathbb{Q}]=4$. Also by Example \ref{unique F example} (1), $t_{\mathbb{Q}}(L)=[\mathbb{Q}(a^4):\mathbb{Q}]=2$. Since the Galois group of Galois closure $\tilde{L}$ over $\mathbb{Q}$ is solvable and $M\subset \tilde{L}$, we are done for any number field $K$ by Lemma \ref{lin disj} (2) and Remark \ref{tau group theoretic}. \end{proof}

   \begin{proposition}\label{excep prop}
       We don't have a positive answer for Theorem \ref{inv tau} for the case $(n,t,\tau)=(4,1,2)$ for any number field.
       
   \end{proposition}

   \begin{proof}
     Let $K$ be any number field. Supppose we have extensions $L/K$ and $M/K$ such that\\
     $[L:K]=4$, $t_K(L)=1$ and $\tau_K(M,L)=2$. The groups which can possibly occur as Galois group $G=Gal(\tilde{L}/K)$ are $\mathfrak{S}_4,\mathfrak{A}_4,D_4, C_4$ and $C_2^2$. Now since $t_K(L)\neq 4$, $G$ cannot be abelian. Thus $G\neq C_4, C_2^2$. It is easy to see that if $G=D_4$ then $t_K(L)=2$ which is not the case. Finally $G\neq \mathfrak{S}_4, \mathfrak{A}_4$ otherwise $\tau_K(M,L)\neq 2$ since $\mathfrak{S}_3$ and $\mathfrak{A}_3$ are maximal subgroups in $\mathfrak{S}_4$ and $\mathfrak{A}_4$ respectively.
   \end{proof}


\subsection{Inverse Compositum Indicium Problem}

\label{inv gamma section}
    
We begin by stating the problem.

\begin{problem}\label{comp ind problem}
 Let $K$ be a number field. Classify all pairs $(n,\gamma)$ where $n>2$ and $n \mid \gamma\mid  n!$ such that there exist extensions $L/K$ and $M/K$ such that $[L:K]=n$ and compositum indicium $\gamma_K (M,L)=\gamma$.
\end{problem}
We have a positive answer to the above problem in the following cases.\smallskip

Case 1: $(n,\gamma)$ where $n=2^m a_1a_2\cdots a_k$ with each $a_i>2$ and $m=0$ or $1$ and $\gamma=2^m b_1b_2\cdots b_k$ with each (i) $b_i=\ ^{a_i}P_j$ for $j\leq a_i$ or (ii) $b_i=a_i\phi(a_i/l)$ for $a_i$ odd and $l\mid a_i$ or (iii) $b_i=a_i\cdot r^{a-1}$ for $r>1$ and $r\mid a_i$ and $a\leq (a_i/r)$.

\begin{proof}
    Firstly we show existence of $L_i/K$ and $M_i/K$ with $M_i\subset \tilde{L_i}$ for $1\leq i\leq k$ such that $[L_i:K]=a_i$ and $\gamma_K(M_i,L_i)=b_i$. The case (i) $b_i=\ ^{a_i}P_j$ for $j\leq a_i$ follows from Example \ref{rho Sn}. The case (ii) $b_i=a_i\phi(a_i/l)$ for $l\mid a_i$ for $K=\mathbb{Q}$ follows from Proposition \ref{tau interesting eg}. Since the Galois group of Galois closure $\tilde{L_i}$ over $\mathbb{Q}$ is solvable and $M_i\subset \tilde{L_i}$, we are done for any number field $K$ by Lemma \ref{lin disj} (2) and Remark \ref{gamma group theoretic}. The case (iii) $b_i=a_i\cdot r^{a-1}$ for $r>1$ and $r\mid a_i$ and $a\leq (a_i/r)$ follows from Remarks \ref{conv countereg} and \ref{rho not divide}.\smallskip

Now by Lemma \ref{lin disj}, $L_i$'s can be chosen such that for each $1\leq t\leq k-1$ we have that $\tilde{L_1}\cdots \tilde{L_{t}}$ and $\tilde{L_{t+1}}$ are linearly disjoint over $K$. Let $L'=L_1\cdots L_k$ and $M'=M_1\cdots M_k$. Thus by Lemma \ref{inv tau lemma} (1), it follows that $[L':K]=a_1\cdots a_k$ and $\gamma_K(M',L')=\gamma_K(M_1,L_1)\cdots\gamma_K(M_k,L_k)=b_1\cdots b_k$. Thus we are done for $m=0$. Now suppose $m=1$, By Lemma \ref{lemma 3 in vanchi}, there exists an extension $F/K$ of degree $2$ such that $\tilde{L'}$ and $F$ are linearly
disjoint over $K$. Let $L=L'F$ and $M=M'F$. Thus by Lemma \ref{inv tau lemma} (1), $[L:K]=2 a_1\cdots a_k = n$ and $\gamma_K(M,L)=\gamma_K(M',L')\cdot \gamma_K(F,F)=2b_1\cdots b_k=\gamma$.\end{proof}

Case 2: $(n,\gamma)$ so that for each prime $p$, there exists $k_p\leq v_p(n)$ such that $k_p\mid (v_p(\gamma)-v_p(n))$ and $v_p(\gamma)\leq p^{(v_p(n)-k_p)}\cdot k_p+ (v_p(n)-k_p)$.

\begin{proof}

First we claim that $p\mid n\iff p\mid \gamma$. Suppose $v_p(\gamma)\geq 1$. Assume on the contrary that $v_p(n)=0$. Thus $1\leq p^{-k_p}\cdot k_p-k_p$ which is a contradiction. Thus $v_p(n)\geq 1$. Let $n_p=p^{v_p(n)}$ and $\gamma_p=p^{v_p(\gamma)}$ where $p$ is prime. If for some $p$, we have $k_p=0$ or $k_p=v_p(n)$, then $v_p(\gamma)\leq v_p(n)$. Thus $v_p(n)=v_p(\gamma)$ and $\gamma_p=n_p=\ ^{n_p}P_1$. For primes with $0< k_p< v_p(n)$, let $r_p=p^{k_p}$. So $r_p>1$ and $r_p\mid n_p$. Let $a_p=\frac{(v_p(\gamma)-v_p(n))}{k_p}+1$. Thus $\gamma_p=n_p\cdot r_p^{a_p-1}$. Also $a_p=\frac{(v_p(\gamma)-v_p(n)+k_p)}{k_p}\leq p^{(v_p(n)-k_p)}=(n_p/r_p)$. By prime factorization, $n= \Pi_{p}\ n_p$ and $\gamma= \Pi_{p}\ \gamma_p$. Observe that when $v_2(n)\geq 2$ we have $n_2\geq 4>2$, and when $v_2(n)=0$ we have $n_2=\gamma_2=1$ and when $v_2(n)=1$, we have $k_p=0$ or $1$ and so $v_2(\gamma)=v_2(n)=1$. Thus by Case 1, we are done 
\end{proof}

\begin{remark}
  One of the major difficulties in giving a complete solution to Problem \ref{comp ind problem} lies in the fact that $\gamma_K(M,L)\mid \gamma_K(\tilde{L},L)=|Gal(\tilde{L}/K)|\mid ([L:K]!)$. Given an integer $n>2$, it seems a hard problem to completely classify group orders of transitive subgroups of $\mathfrak{S}_n$ which are realizable as Galois groups over a given number field $K$.
  \end{remark}

  \begin{remark}
   The condition $n\mid \gamma\mid n!$ in Problem \ref{comp ind problem} is not sufficient in general.  We don't have a positive answer for Problem \ref{comp ind problem} for $(n,\gamma)=(5,15)$ for $K=\mathbb{Q}$. Assume on the contrary, there exist extensions $L/\mathbb{Q}$ and $M/\mathbb{Q}$ such that $[L:\mathbb{Q}]=5$ and compositum indicium $\gamma_{\mathbb{Q}} (M,L)=15$. Let $L_1/\mathbb{Q},L_2/\mathbb{Q}$ be two extensions that are isomorphic to $L/\mathbb{Q}$ such that $L_1,L_2\subset M$ and $L_1\neq L_2$. As $L_1,L_2\subset L_M$ and $[L_M:\mathbb{Q}]=15$, so $[L_M:L_1]=3$ and thus $L_M=L_1L_2$. Thus $(5,5,15)$ is compositum feasible triplet over $\mathbb{Q}$. By Theorem 36 in \cite{drungilas2012degree} (which says that $(5,5,15)$ is not compositum feasible), we have a contradiction.
  \end{remark}

\section{Applications of Root Cluster Theory}

\subsection{Minimal Generating Sets of Galois Closure of an Extension}\label{Minimal Generating Sets of Galois Closure}

We have the notion of minimal generating sets of the splitting field of a polynomial over $K$ introduced by the author and Vanchinathan in Section 2 of their work in \cite{jaiswal2025minimal}. The following is a field theoretic formulation of the same in light of Proposition 2.1 in \cite{jaiswal2025minimal}.\smallskip

Consider an extension $L/K$. Let $S= \{L_i\}_{i=1}^s$ where $L_i$'s are the $s=s_K(L)$ many distinct fields isomorphic to $L$ over $K$. For any set $B\subset S$ we denote compositum of fields in $B$ as $L_B$.

\begin{definition}
      A set $B \subset S$ is called a minimal generating set of the Galois Closure $\tilde{L}$ of $L/K$ if the following hold.
      
      \begin{enumerate}
          \item $L_B =\tilde{L}$  

          \item For any set $A\subsetneq B$, we have $L_A\neq \tilde{L}$. 
      \end{enumerate}

We also refer to this set simply as a minimal generating set for $L/K$.

\end{definition}

\begin{remark}\label{min gen group theoretic}
    Equivalently $B=\{L_{i_j}\}_{j=1}^m\subset S$ is a minimal generating set of the Galois closure of $L/K$ if $\cap_{j\in [m]}\ H_{i_j}=1$ and for any $J \subsetneq [m]$ we have $\cap_{j\in J}\ H_{i_j}\neq 1$ where $[m]=\{1,2,\dots,m\}$ and $G=Gal(\tilde{L}/K)$ and $H_{i_j}$'s are subgroups of $G$ which fix $L_{i_j}$'s respectively (which are conjugates of $Gal(\tilde{L}/L)$ in $G$).
\end{remark}

\begin{remark}\label{min gen isom}
   Suppose $B=\{L_{i_j}\}_{j=1}^m\subset S$ is a minimal generating set of the Galois closure of $L/K$. It is easy to verify that for any $\sigma\in Gal(\tilde{L}/K)$ we have that $B'=\sigma(B)=\{\sigma(L_{i_j})\}_{j=1}^m\subset S$ is also a minimal generating set for $L/K$.
\end{remark}

The following is a reformulation of Lemma 2.1 (2) in \cite{jaiswal2025minimal}.

\begin{lemma}

\label{from generating to minimal}
Let $C\subset S$ be a generating set of the Galois closure of $L/K$ i.e. $L_C=\tilde{L}$. Then there exists $B\subset C$ such that $B$ is a minimal generating set of the Galois closure of $L/K$.     

\end{lemma}

The following is a reformulation of Proposition 2.3 (1) in \cite{jaiswal2025minimal}.

\begin{lemma}\label{lem min gen}
    Consider $B=\{L_{i_j}\}_{j=1}^m\subset S$. We have that $B$ is a minimal generating set of the Galois closure of $L/K$ if and only if for every permutation $(l_1,l_2,\dots, l_m)$ of $(i_1,i_2,\dots, i_m)$, 
 \[  K \subseteq L_{l_1} \subseteq L_{l_1} L_{l_2}  \subseteq \dots \subseteq L_{l_1} L_{l_2}\cdots L_{l_m} \] is a cluster tower for $L/K$ of length $m$. \smallskip
 
 In this case we refer any such cluster tower as a minimal cluster tower.
\end{lemma}

\begin{proposition}
    
\label{SCM min gen}
    Suppose $M/K$ is obtained by strong cluster magnification from $L/K$ through $F/K$. Then $B=\{L_{i_j}\}_{j=1}^m\subset S$ is a minimal generating set of the Galois closure of $L/K$ if and only if $B'=\{L_{i_j}F\}_{j=1}^m$ is a minimal generating set of the Galois closure of $M/K$. In this case $|B|=|B'|$.
\end{proposition}

\begin{proof}

Suppose $B$ is a minimal generating set of the Galois closure of $L/K$. Thus by Lemma \ref{lem min gen}, this is equivalent to saying that for every permutation $(l_1,l_2,\dots, l_m)$ of $(i_1,i_2,\dots, i_m)$, 
 \[  K \subseteq L_{l_1} \subseteq L_{l_1} L_{l_2}  \subseteq \dots \subseteq L_{l_1} L_{l_2}\cdots L_{l_m} \] is a cluster tower for $L/K$ of length $m$. Hence by Proposition \ref{SCM cluster tower}, this is equivalent to saying that for every permutation $(l_1,l_2,\dots, l_m)$ of $(i_1,i_2,\dots, i_m)$, 
 \[  K \subseteq L_{l_1} F \subseteq L_{l_1} L_{l_2} F  \subseteq \dots \subseteq L_{l_1} L_{l_2}\cdots L_{l_m} F \] is a cluster tower for $M/K$ of length $m$. Therefore by Lemma \ref{lem min gen}, this is equivalent to $B'$ being a minimal generating set of the Galois closure of $M/K$.\end{proof}

Proposition 2.2 in \cite{jaiswal2025minimal} demonstrates that two minimal generating sets of Galois closure need not have same cardinalities. Thus we also have the following notion.

\begin{definition}
    $B\subset S$ is said to be shortest minimal generating set of the Galois closure of $L/K$ if $B$ is minimal generating set with least possible cardinality. We similarly define longest minimal generating set having greatest possible cardinality.
\end{definition}


\begin{proof}[Proof of Theorem \ref{inv min gen xn} (1)]

Let $n'=\frac{n}{2^{v_2(n)}}$. Thus $n'>2$ is an odd composite number. By Main Theorem (1) in \cite{jaiswal2025minimal}, there exists an $L'/\mathbb{Q}$ of degree $n'$ for which the Galois closure has minimal generating sets of cardinalities $2,3,\dots , \omega(n')$ (where $\omega(n')$ is the number of distinct prime divisors of $n'$) and these are the only possible cardinalities for minimal generating set for $L'/K$. Observe that $\omega'(n)=\omega(n')$.\smallskip

Now from the proof in \cite{jaiswal2025minimal}, we have that the Galois group of Galois closure of $L'/\mathbb{Q}$ is $G=\mathbb{Z}/n\mathbb{Z}\rtimes (\mathbb{Z}/n\mathbb{Z})^{\times}$ which is solvable. Also $H=Gal(\tilde{L'}/L')=1\times (\mathbb{Z}/n\mathbb{Z})^{\times}$. By Lemma \ref{lin disj} (2), $G$ is realizable as a Galois group over $K$ say for $E/K$. Let $M/K$ be fixed field of $H$ inside $E/K$. By Remark \ref{min gen group theoretic}, $M/K$ has degree $n'$ and its Galois closure has minimal generating sets of cardinalities $2,3,\dots , \omega(n')$ and these are the only possible cardinalities for minimal generating set for $M/K$.\smallskip

By Lemma \ref{lemma 3 in vanchi}, there exists a cyclic Galois extension $F/K$ of degree $2^{v_2(n)}$ such that $\tilde{M}$ and $F$ are linearly
disjoint over $K$. Let $L=MF$. By Proposition \ref{cluster magnification}, $[L:K]=n'\cdot 2^{v_2(n)}=n$. By Proposition \ref{SCM min gen}, the Galois closure of $L/K$ has minimal generating sets of cardinalities $2,3,\dots , \omega'(n)$ and these are the only possible cardinalities for minimal generating set for $L/K$. Also observe that $E=\tilde{M}$ and $\tilde{L}=\tilde{M}F=EF$ and $Gal(\tilde{L}/K)\cong Gal(E/K)\times Gal(F/K)\cong G\times (\mathbb{Z}/(2^{v_2(n)})\mathbb{Z})$ which is solvable. 
 \end{proof}

An improvement on Theorem \ref{inv min gen xn} (1) (Similar improvement as in Theorem \ref{inverse cluster size improv}): Let $K$ be a number field and $n$ be an integer such that $\frac{n}{2^{v_2(n)}}>2$ is composite. Then we get arbitrarily large finite families of degree $n$ extensions $L/K$  inside $\bar{K}$ which are pairwise non-isomorphic over $K$ and are pairwise linearly disjoint over $K$, for which the Galois closure has minimal generating sets of cardinalities $2,3,\dots , \omega'(n)$ and these are the only possible cardinalities for minimal generating set of Galois closure of $L/K$.

    
\smallskip


\begin{proof}[Proof of Theorem \ref{inv min gen xn} (2)]
    Consider any number field $K$ and any $n>2$ and $d\mid n$ with $d>2$. By Main Theorem (2) in \cite{jaiswal2025minimal}, there exists an $L'/K$ of degree $d$ for which the Galois closure has all its minimal generating sets of cardinality $k$ for the following values of $k$ : (i) $k=d-1$, (ii) $k=d-2$. By Lemma \ref{lemma 3 in vanchi}, there exists a Galois extension $F/K$ of degree $n/d$ such that $\tilde{L'}$ and $F$ are linearly
disjoint over $K$. Let $L=L'F$. By Proposition \ref{cluster magnification}, $[L:K]=d\cdot (n/d)=n$. By Proposition \ref{SCM min gen}, the Galois closure of $L/K$ has all its minimal generating sets of cardinality $k$.\smallskip

Now consider any number field $K$ and any $n>2$ and $k=2$. Let $q>2$ be a prime power such that $q\mid n$. By Main Theorem (3) in \cite{jaiswal2025minimal}, there exists an $L'/\mathbb{Q}$ of degree $q$ for which the Galois closure has all its minimal generating sets of cardinality $2$. Now from the proof in \cite{jaiswal2025minimal}, we have that the Galois group of Galois closure of $L'/\mathbb{Q}$ is $G=\mathbb{F}_q\rtimes (\mathbb{F}_q)^{\times}$ which is solvable. Let $H=Gal(\tilde{L'}/L')$. By Lemma \ref{lin disj} (2), $G$ is realizable as a Galois group over $K$ say for $E/K$. Let $M/K$ be fixed field of $H$ inside $E/K$. By Remark \ref{min gen group theoretic}, $M/K$ has degree $q$ and its Galois closure has all its minimal generating sets of cardinality $2$. By Lemma \ref{lemma 3 in vanchi}, there exists a Galois extension $F/K$ of degree $n/q$ such that $\tilde{M}$ and $F$ are linearly
disjoint over $K$. Let $L=MF$. Thus $[L:K]=n$ and by Proposition \ref{SCM min gen}, the Galois closure of $L/K$ has all its minimal generating sets of cardinality $2$.\end{proof}

\begin{proof}[Proof of Theorem \ref{inv min gen xn} (3)]
By Main Theorem (3) in \cite{jaiswal2025minimal}, there exists an $L'/\mathbb{Q}$ of degree $n'$ for which the Galois closure has all its minimal generating sets of cardinality $k$ for the following values of $n'$ and $k$:\begin{enumerate}  
    
\item (i) $n'=12$ and $k=5$, (ii) $n'=11$ and $k=4$.

\item $n'=p+1$ where $p$ is an odd prime and $k=3$.

\end{enumerate}\smallskip

Now the result follows by applying Lemma \ref{lemma 3 in vanchi} and Proposition \ref{SCM min gen}.
\end{proof}

\begin{proof}[Proof of Theorem \ref{inverse min gen}]

Let $r=n/s$. Hence $r>1$. By the proof of second approach of Theorem \ref{inv root cap} we have $L/K$ such that $[L:K]=n$ and $r_K(L)=r$. Thus $s_K(L)=s$. Let $L_i$'s be the $s$ many distinct fields isomorphic to $L$ over $K$. Now by Remark \ref{rem for min gen set} we have for every permutation $(i_1,i_2,\dots, i_s)$ of $(1,2,\dots, s)$ that \[ K \subseteq L_{i_1} \subseteq L_{i_1} L_{i_2} \subseteq \dots \subseteq L_{i_1} L_{i_2} \cdots L_{i_s}\] is a cluster tower for $L/K$ of length $s$. Thus by Lemma \ref{lem min gen}, $S=\{L_i\}_{i=1}^s$ is a minimal generating set of the Galois closure of $L/K$ which is the unique minimal generating set and thus also a shortest minimal generating set.\end{proof}

\begin{remark}
   Assuming the condition $s\neq n$ is necessary in Theorem \ref{inverse min gen}. Suppose $L/K$ is degree $n$ extension. Then we can't have a minimal generating set of cardinality $n$. This follows from the argument in Remark \ref{remark inv root cap} as cardinality of minimal generating set being $n$ forces $s_K(L)=n$ and $r_K(L)=1$. Any subset of minimal generating set of cardinality $n-1$ also generates the Galois closure which contradicts the minimality.
\end{remark}

An improvement on Theorem \ref{inverse min gen} (Similar improvement as in Theorem \ref{inverse cluster size improv}): Let $K$ be a number field. Given positive integers $n>2$ and $s|n$ with $s<n$. We get arbitrarily large finite families of degree $n$ extensions $L/K$  inside $\bar{K}$ which are pairwise non-isomorphic over $K$ and are pairwise linearly disjoint over $K$, for which the Galois closure has a minimal generating set of cardinality $s$. Furthermore each $L/K$ satisfies $s_K(L)=s$. Hence there is a unique minimal generating set for the Galois closure of $L/K$ which is thus, also a shortest minimal generating set.

\begin{proposition}

    \label{min gen unique prop}

    Consider $L/K$. Let $D=\{L_i\ |\ L_{S\backslash \{L_i\}}\neq \tilde{L}\}$.
    
\begin{enumerate}

\item  Let $B$ be any minimal generating set of the Galois closure of $L/K$. Then $B\supset D$.

\item We have that either $D=\emptyset$ or $D=S$. In the second case $S$ is the only minimal generating set of the Galois closure of $L/K$.

\item Suppose $B\subset S$ is the unique minimal generating set of the Galois closure of $L/K$. Then $B=S$.
 \end{enumerate}

    

    
\end{proposition}

\begin{proof}\hfill

\begin{enumerate}
    \item Let $L_i$ be such that $L_{S\backslash \{L_i\}}\neq \tilde{L}$. Assume on the contrary that $L_i\not \in B$. Thus $B\subset S\backslash \{L_i\}$. Thus $L_B\subset L_{S\backslash \{L_i\}}$. Hence $L_B\neq \tilde{L}$ which is a contradiction.\smallskip

    \item Suppose $D\neq \emptyset$ i.e. there is an $L_i$ such that $L_{S\backslash \{L_i\}}\neq \tilde{L}$. Now for any $1\leq j\leq s=s_K(L)$, we can choose $\sigma\in Gal(\tilde{L}/K)$ such that $\sigma(L_i)=L_j$. Thus $\sigma(S\backslash \{L_i\})=S\backslash \{L_j\}$. Hence $L_{S\backslash \{L_j\}}=\sigma (L_{S\backslash \{L_i\}})\neq \tilde{L}$. Thus $L_j\in D$ for any $1\leq j \leq s$.

    \smallskip

   \item Consider $L_i$ such that $L_{S\backslash \{L_i\}}=\tilde{L}$. Now since $S\backslash \{L_i\}$ is a generating set of $\tilde{L}$, by Lemma \ref{from generating to minimal} we have a minimal generating set $B'$ of $\tilde{L}$ such that $B'\subset S\backslash \{L_i\}$. By uniqueness of $B$ we have $B=B'$. Thus $B\subset S\backslash \{L_i\}$. Hence $L_i\not \in B$. By part (1), $B=D$. By part (2) we are done. Alternatively, let $L_i\in B$. Now for any $1\leq j\leq s=s_K(L)$, we can choose $\sigma\in Gal(\tilde{L}/K)$ such that $\sigma(L_i)=L_j$. By Remark \ref{min gen isom}, we have that $B'=\sigma(B) \subset S$ is also a minimal generating set of $\tilde{L}$. By uniqueness of $B$ we have $B=B'$. Since $L_j\in B'$. Thus $L_j\in B$ for any $1\leq j\leq s$. Therefore $B=S$.
\end{enumerate}\end{proof}






\subsection{Compositum Feasible Triplets} \label{Compositum Feasible Triplets}

Along with the notion of compositum feasible triplets, the following notions were also introduced by Drungilas et al. in \cite{drungilas2012degree} with $\mathbb{Q}$ as the base field $K$.

\begin{definition}
 Let $K$ be a perfect field. A triplet of positive integers $(a,b,c)$ is said to be 
    
    \begin{itemize}

        \item sum feasible over $K$ if there exist $\alpha,\beta,\gamma\in \bar{K}$ with degrees of respective minimal polynomial over $K$ as $a,b$ and $c$ such that $\alpha+\beta+\gamma=0$.

        \item product feasible over $K$ if there exist $\alpha,\beta$ and $\gamma\in \bar{K}$ with degrees of respective minimal polynomial over $K$ as $a,b$ and $c$ such that $\alpha\beta\gamma=1$.

    \end{itemize}
\end{definition}

\begin{remark}
    If $(a,b,c)$ is compositum feasible over $K$ then clearly the permutation $(b,a,c)$ is also compositum feasible over $K$. If $(a,b,c)$ is sum feasible (or product feasible) over $K$ then clearly all the six permutations are also sum feasible (or product feasible) over $K$.

\end{remark}

The proof of Proposition 1 in \cite{drungilas2012degree} also works for any infinite perfect field as base field in place of $\mathbb{Q}$. We state that result for infinite perfect fields below.

\begin{lemma} \label{comp imply sum prod} Let $K$ be an infinite perfect field. If the triplet $(a, b, c)$
is compositum feasible over $K$, then
it is also sum feasible and product feasible over $K$.
    
\end{lemma}

\begin{lemma}[part of Lemma 14 \cite{drungilas2012degree}] \label{comp condition} Suppose that $(a,b,c)$ is compositum feasible over $K$. Then $c=lcm(a,b)\cdot t$ for some positive integer $t\leq gcd(a,b)$.
\end{lemma}

\begin{remark}
    If $(a,b,c)$ is compositum feasible over $K$ and some permutation other than $(b,a,c)$ is also compositum feasible over $K$ then by Lemma \ref{comp condition}, we have $c=a$ or $c=b$. If all permutations are compositum feasible over $K$ then $a=b=c$. 
\end{remark}

\begin{proposition}
    Suppose $(a,b,c)$ is compositum feasible over $K$.

    \begin{enumerate}
        \item Suppose $a<c$ and $b<c$. Then the Galois group of the Galois closure of the compositum field over $K$ cannot be $S_c$ or $A_c$.\smallskip

        \item Suppose the Galois group of the Galois closure of the compositum field over $K$ is cyclic. Then $c=lcm(a,b)$.

     \end{enumerate}
\end{proposition}

\begin{proof}\hfill

\begin{enumerate}
    \item 

Assume on the contrary that we have $L/K$ and $L'/K$ of degrees $a$ and $b$ such that compositum $LL'/K$ has degree $c$ and Galois closure of $LL'/K$ say $M/K$ has Galois group $S_c$ or $A_c$. In that case, the subgroup fixing $LL'$ is isomorphic to $S_{c-1}$ or $A_{c-1}$ respectively. Since this subgroup is maximal in the whole group, we cannot have nontrivial intermediate fields of $LL'/K$, which is a contradiction.\smallskip

\item  Suppose we have $L/K$ and $L'/K$ of degrees $a$ and $b$ such that compositum $LL'/K$ has degree $c$ and Galois closure of $LL'/K$ say $M/K$ has cyclic Galois group. Then clearly $M=LL'$. Now $Gal(M/K)=\mathbb{Z}/c\mathbb{Z}=<\bar{1}>$. Thus $Gal(M/L)=\mathbb{Z}/(c/a)\mathbb{Z}=<\bar{a}>$ and $Gal(M/L')=\mathbb{Z}/(c/b)\mathbb{Z}=<\bar{b}>$. Since $M=LL'$, we have $<\bar{a}>\cap <\bar{b}>=\{\bar{0}\}$. We have $c\geq lcm(a,b)$. Assume $c >lcm(a,b)$. Then $\bar{0}\neq \bar{lcm(a,b)}\in <\bar{a}>\cap <\bar{b}>$ which is a contradiction. Hence $c=lcm(a,b)$.

\end{enumerate}
\end{proof}

The following is Theorem 2.4 in \cite{bhagwat2024right} stated here for $l=2$ and base field any perfect field $K$ in place of $\mathbb{Q}$ (the same proof still works). The second condition in the hypothesis is stated as in Remark 2.4.1 (2) in \cite{bhagwat2024right}.

\begin{lemma}\label{right splitting lemma}

  Let $K$ be a perfect field and $J/K$ be a  finite extension and let $E_1$ and $E_2$ be finite Galois extensions of $K$ contained in $\bar{K}$. Let $G_i=Gal(E_i/K)$ for $i=1,2$. Suppose  \smallskip
     \begin{enumerate}  
     
     \item $E_1 \cap  E_2 = J$ \smallskip

     \item for every $i$, $G_i \cong H_i \rtimes G_i/H_i$ (for some semidirect product group law) where $H_i=Gal(E_i/J)$. \smallskip
     
     \end{enumerate}
     
     Then $Gal(E_1E_2/K)\cong (H_1\times H_2)\rtimes G_1/H_1$ (for some semidirect product group law). 
    
\end{lemma}







\begin{proof}[Proof of Theorem \ref{comp feas thm} (1)]

Since $t\ |\ gcd(a,b)$, we have $gcd(a,b)=st$ for some positive integer $s$. Thus $lcm(a,b)\cdot t=ab/s$. Thus the triplet is $(a,b,ab/s)$ for $s\ |\ gcd(a,b)$.\smallskip


If $s=a$ or $b$. Then the triplet is $(a,b,b)$ with $a|b$ or $(a,b,a)$ with $b|a$. Without loss of generality let $a\leq b$ and $a|b$. By Lemma \ref{lemma 3 in vanchi} for $K/K$, we get a cyclic Galois extension $L'/K$ of degree $b$. Since $a|b$ and $L'/K$ is cyclic, we have an intermediate extension $L/K$ of degree $a$. Thus clearly $LL'=L'$ which is Galois over $K$ with a cyclic hence a solvable Galois group.

\smallskip

If $s<a,b$. Then $a,b>1$. Let $r=a/s$. Thus $r>1$. For $a=2$ we have $s=1$ and $r=2$. By Lemma \ref{lemma 3 in vanchi} for $K/K$, we get a cyclic Galois extension $L/K$ of degree $2$ and hence $r_K(L)=2$ and $G_1=Gal(\tilde{L}/K)$ is $\mathbb{Z}/2\mathbb{Z}$. For $a>2$, we proceed as we did in the proof of Theorem \ref{n,r} in \cite{Bhagwat_2025}. Thus we have that there exists an extension $L/K$ with Galois closure $\tilde{L}$ such that $[L:K]=a$, $r_K(L)=r$ and $G_1=Gal(\tilde{L}/K)$ is $(\Z/r\Z)^s \rtimes \Z / s\Z$. By Theorem 7.3.4 in \cite{Bhagwat_2025}, we get an intermediate extension $N/K$ of $L/K$ such that $L/N$ is cyclic Galois extension of degree $r$ and $N/K$ is cyclic Galois extension of degree $s$ and $H_1=Gal(\tilde{L}/N)$ is $(\Z/r\Z)^s$. Now $b/s>1$. Hence by Lemma \ref{lemma 3 in vanchi} for $\tilde{L}/K$, there exists a cyclic Galois extension $F/K$ of degree $b/s$ such that $\tilde{L}$ and $F$ are linearly
disjoint over $K$. Let $L'=NF$. Since $\tilde{L}\cap F=K$. Thus $N\cap F=K$. Hence $[L':N]=[F:K]=b/s$ and $L'/K$ is Galois with $[L':K]=[L':N][N:K]=(b/s)\cdot s=b$ and $G_2=Gal(L'/K)=(\mathbb{Z}/(b/s)\mathbb{Z})\times (\mathbb{Z}/s\mathbb{Z})$ and $H_2=Gal(L'/N)=\mathbb{Z}/(b/s)\mathbb{Z}$. We also have $\tilde{L}\cap L'=N$. Hence $[LL':N]=[L:N][L':N]=r\cdot b/s=ab/s^2$. Thus $[LL':K]=[LL':N][N:K]=(ab/s^2)\cdot s= ab/s$. By Lemma \ref{right splitting lemma} for $J=N$, $E_1=\tilde{L}$ and $E_2=L'$, we get that $Gal(\tilde{L}L'/K)$ is $(H_1\times H_2)\rtimes G_1/H_1=((\Z/(a/s)\Z)^s\times \mathbb{Z}/(b/s)\mathbb{Z})\rtimes \Z / s\Z$ (for some semidirect product group law). This group is clearly solvable. Since $\tilde{L}L'$ is Galois closure of $LL'/K$, we are done. The last assertion follows from Lemma \ref{comp imply sum prod}.\end{proof}

\begin{remark}
  Theorem \ref{comp feas thm} (1) doesn't cover the case $(a,b,lcm(a,b)\cdot t)$ where $t\leq gcd(a,b)$ and $t \nmid gcd(a,b)$. In fact it is not necessary that such a triplet is compositum feasible over number fields. For example $(p,p,p(p-l))$ is not compositum feasible over $\mathbb{Q}$ for every integer $l\geq 2$ and every prime $p>l^2-l+1$ (See Theorem 1.2 \cite{drungilas2013degree}). But we do have examples of such triplets like $(n,n,n(n-1))$ for $n\geq 3$ which are indeed compositum feasible over $\mathbb{Q}$ (See Proposition 29 (1) \cite{drungilas2012degree}).
\end{remark}

\begin{remark}
Given any compositum feasible triplet $(a,b,c)$ over a perfect field $K$, it is is not always necessary that we can choose the field extensions such that compositum has Galois closure over $K$ having a solvable Galois group. As noted in \cite{virbalas2023compositum} (See note after Lemmas 2.7 and 2.8 in \cite{virbalas2023compositum}), we have for $p$ prime and $1\leq s\leq p-1$ that if $(p,p,ps)$ is compositum feasible over $\mathbb{Q}$ for field extensions with Galois group of Galois closure of compositum field as solvable then we should have $s\ |\ (p-1)$. We have from Theorem 1.1 (c) in \cite{virbalas2023compositum} that $(11,11,11\cdot 6)$ is compositum feasible over $\mathbb{Q}$. Since $6\nmid 10$. Hence we cannot choose the field extensions for the triplet $(11,11,11\cdot 6)$ such that the Galois closure of the compositum has a solvable Galois group.

\end{remark}

 The following cases (some of which may be covered over $\mathbb{Q}$ in earlier works on this problem) follow easily (over any number field) from Theorem \ref{comp feas thm} (1).

\begin{corollary}
    We have the following triplets to be compositum feasible over any number field.

\begin{enumerate}

    \item $(n,n,nd)$ for positive integers $n$ and $d|n$. In particular $(p^k,p^k,p^j)$ for positive integers $p,k$ and $j$ where $p$ is prime and $k\leq j\leq 2k$.
    
    \smallskip

    \item $(p,t,t)$ for positive integers $p$ and $t$ where $p$ is prime and $p|t$ (\cite{drungilas2012degree}).\smallskip

    \item $(a(abc)^n,b(abc)^n, c(abc)^{n+1})$ for positive integers $a,b$ and $c$ and $n\geq 2$ (\cite{drungilas2012degree}).
\end{enumerate}
    
\end{corollary}


An improvement on Theorem \ref{comp feas thm} (1) (Similar improvement as in Theorem \ref{inverse cluster size improv}): Let $K$ be a number field. Given $(a,b,c)\in \mathcal{C}'_K$, we get arbitrarily large finite families of extensions $M/K$  inside $\bar{K}$ which are pairwise non-isomorphic over $K$ and are pairwise linearly disjoint over $K$ and each having degree $c$ and each is compositum of two intermediate extensions of degrees $a$ and $b$.
\smallskip

The following follows from Lemma \ref{3.1.4}.

\begin{proposition}\label{comp feas extend}
    Suppose $(a, b, c)$ is compositum feasible over $\mathbb{Q}$. If we can choose the field extensions such that compositum has Galois closure over $\mathbb{Q}$ having Galois group $G$ such that for every positive integer $n$, the $n$-fold direct product $G^n$ occurs as a Galois group over $\mathbb{Q}$. Then $(a, b,c)$ is compositum feasible over any number field.
\end{proposition}

\begin{corollary}\label{comp feas extend corollary}
     Suppose $(a, b, c)$ is compositum feasible over $\mathbb{Q}$. If we can choose the field extensions such that compositum has Galois closure over $\mathbb{Q}$ having a solvable Galois group. Then $(a, b,c)$ is compositum feasible over any number field.
\end{corollary}



\begin{remark}
    Clearly if $(a,b,c),(a',b',c')\in \mathcal{C}_K$ with $gcd(c,c')=1$. Then $(aa',bb',cc')\in \mathcal{C}_K$. Suppose $L/K$ and $L'/K$ have degrees $a$ and $b$ such that $LL'/K$ has degree $c$ and $M/K$ and $M'/K$ have degrees $a'$ and $b'$ such that $MM'/K$ is of degree $c'$. Since $gcd(c,c')=1$ and $lcm(a,b)\ |\ c$ and $lcm(a',b')\ |\ c'$. Thus $gcd(a,a')=gcd(b,b')=1$. Thus degrees of $LM/K$, $L'M'/K$ and $(LL')(L'M')/K$ are $aa',bb'$ and $cc'$ respectively and $(LL')(MM')=(LM)(L'M')$. 
\end{remark}

\begin{lemma}[Proposition 3.2 \cite{drungilas2016degrees}] \label{igp lemma} Suppose $(a, b, c),(a',b',c')\in \mathcal{C}_K$. If for one of the triplets we can choose the field extensions such that compositum has Galois closure over $K$ having Galois group $G$ such that for every positive integer $n$, the $n$-fold direct product $G^n$ occurs as a Galois group over $K$. Then $(aa', bb',cc')\in \mathcal{C}_K$.
    
\end{lemma}

\begin{corollary}\label{igp lemma corollary}
  Let $K$ be a number field. Suppose $(a, b, c),(a',b',c')\in \mathcal{C}_K$. If for one of the triplets we can choose the field extensions such that compositum has Galois closure over $K$ having a solvable Galois group. Then $(aa', bb',cc')\in \mathcal{C}_K$.
\end{corollary}

    




\begin{proof}[Proof of Theorem \ref{comp feas thm} (2)]  Follows from Theorem \ref{comp feas thm} (1) and Corollary \ref{igp lemma corollary}. \end{proof}

   
   

    \begin{proof}[Proof of Theorem \ref{comp feas thm} (3)]
  
  Suppose $(a,b,c),(a',b',c')\in \mathcal{C}'_K$. Thus $c=ab/s$ where $s\ |\ gcd(a,b)$ and $c'=a'b'/s'$ where $s'\ |\ gcd(a',b')$ as in proof of Theorem \ref{comp feas thm} (1). Consider $(aa',bb',cc')$. Now $cc'=(aa')(bb')/(ss')$. Since we have $ss'\ |\ gcd(a,b)gcd(a',b')$ and $gcd(a,b)gcd(a',b')\ |\ gcd(aa',bb')$, we are done.\end{proof}

   \begin{lemma}[Theorem 1.1 \cite{drungilas2016degrees}] \label{sum imply prod} If the triplet $(a, b, c)$
is sum feasible over $\mathbb{Q}$, then
it is also product feasible over $\mathbb{Q}$.
       
   \end{lemma}

   \begin{proposition}\label{Sn comp feas}
Let $K$ be a number field. Let $n\in \mathbb{N}$ and $i,j$ and $k$ be non-negative integers. 
   \begin{enumerate}
       \item   Suppose $i\leq j\leq k$ and $k+i\leq n-1$. Then $(^n P_j, ^n P_k, ^n P_{k+i})\in \mathcal{C}_K$. We have $(^n P_j, ^n P_k, ^n P_{k+i})\in \mathcal{C}'_K$ if and only if $^{n-k} P_i\ |\ ^n P_j$. In particular, for $n\geq 3$ we have $(n,n,n(n-1))\in \mathcal{C}_K\backslash \mathcal{C}'_K$.

       \item  $(^n C_j, ^n C_k, ^n C_{k+j})$ is sum feasible over $K$ for $k+j<n-1$. It is also product feasible over $\mathbb{Q}$. In particular, for $n>3$ we have that $(n,n,n(n-1)/2)$ is sum feasible over $K$.
   \end{enumerate}
       
   \end{proposition}

   \begin{proof}

   Let $f$ over $K$ be irreducible of degree $n>2$ with Galois group ${\mathfrak S}_n$ (Such a polynomial exists by Lemma \ref{Sn}).\smallskip

   \begin{enumerate}
       \item Consider $k+i$ many distinct roots of $f$ in a fixed order. Let $L$ be field generated by first $j$ many roots and $L'$ be field generated by last $k$ many roots. Then $LL'$ is field generated by all $k+i$ many roots. We know that degree of the field generated by $m$ many distinct roots over $K$ is $^nP_m$. Thus $(^n P_j, ^n P_k, ^n P_{k+i})\in \mathcal{C}_K$. Now $(^n P_j, ^n P_k, ^n P_{k+i})\in \mathcal{C}'_K$ $\iff$ $( ^n P_{k+i}/ ^n P_k)=\ ^{n-k}P_i$ divides $^n P_j$.\smallskip

       \item Consider $k+j$ many distinct roots of $f$ in a fixed order. Let $\alpha$ be sum of first $j$ many roots and $\beta$ be sum of last $k$ many roots. Let $\gamma$ be sum of all $k+j$ many roots. We know by proof of Theorem 7.3.6 in \cite{Bhagwat_2025} that degree of the field generated by sum of $m<n-1$ many distinct roots over $K$ is $^nC_m$. Thus $(^n C_j, ^n C_k, ^n C_{k+j})$ is sum feasible over $K$. By Lemma \ref{sum imply prod} product feasibility over $\mathbb{Q}$ follows.

   \end{enumerate}
       
   \end{proof}

   \begin{remark} Let $K$ be a number field. If $(a,b,c)\in \mathcal{C}'_K$ and $(a',b',c')\in \mathcal{C}_K\backslash \mathcal{C}'_K$ then its neither necessary that $(aa',bb',cc')$ lies in $\mathcal{C}'_K$ nor necessary that it lies in $\mathcal{C}_K\backslash \mathcal{C}'_K$. Let $n\geq 3$. Now we have $(n,n,n(n-1))\in \mathcal{C}_K \backslash \mathcal{C}'_K$ and $(n,n,n), (n-1,n-1,n-1)\in \mathcal{C}'_K$. Observe that $(n^2,n^2,n^2(n-1))\in \mathcal{C}_K \backslash \mathcal{C}'_K$ but on the other hand $(n(n-1),n(n-1),n(n-1)^2)\in \mathcal{C}'_K$.\smallskip

   In fact if $(a,b,c)\in \mathcal{C}_K\backslash \mathcal{C}'_K$ then there exist infinitely many elements of $\mathcal{C}'_K$ namely $(cn,cn,cn)$ for $n\in \mathbb{N}$ such that $(a\cdot cn,b\cdot cn,c\cdot cn)\in \mathcal{C}'_K$. Similarly there exist infinitely many elements of $\mathcal{C}'_K$ namely $(p,p,p)$ for $p>c$ prime such that $(a\cdot p, b\cdot p, c\cdot p)\in \mathcal{C}_K\backslash \mathcal{C}'_K$.\smallskip

   Similarly if $(a,b,c),(a',b',c')\in \mathcal{C}_K\backslash \mathcal{C}'_K$ such that $(aa',bb',cc')\in \mathcal{C}_K$. Then its neither necessary that $(aa',bb',cc')$ lies in $\mathcal{C}'_K$ nor necessary that it lies in $\mathcal{C}_K\backslash \mathcal{C}'_K$. For $K=\mathbb{Q}$, we have $(3,3,6),(4,4,12)\in \mathcal{C}_K\backslash \mathcal{C}'_K$. Clearly $(12,12,72)\in \mathcal{C}'_K$. By Theorem 1 in \cite{drungilas2019degree}, we have that $(9,9,36)\in \mathcal{C}_K$ but clearly $(9,9,36)\not \in \mathcal{C}'_K$. 
\end{remark}

The following notion was introduced in Appendix A in \cite{maciulevivcius2023degree}.

\begin{definition}
    A triplet $(a,b,c)\in \mathcal{C}_K$ is called reducible if it can be written as product of two nontrivial $\mathcal{C}_K$-triplets that is $(a,b,c)=$
    $(a_1a_2,b_1b_2,c_1c_2)$ where $(a_1,b_1,c_1),(a_2,b_2,c_2)\in \mathcal{C}_K\backslash \{(1,1,1)\}$. It is called irreducible if it is not reducible.
\end{definition}



\begin{proof}[Proof of Theorem \ref{comp feas thm} (4)]  We will prove the result step by step. Suppose $(a,b,c)$ is irreducible.

\begin{enumerate}
\item Then we have $t=1$.\smallskip 

Suppose $t\neq 1$. Thus $a,b\neq 1$ and $(a,b,c)=(a\cdot 1, (b/t) \cdot t, lcm(a,b)\cdot t)$. Clearly $(1,t,t)\in \mathcal{C}'_K\backslash \{(1,1,1)\}$. Now consider $(a,b/t,lcm(a,b))$. Since $t\ |\ gcd(a,b)$. Thus $lcm(a,b)\cdot t\ |\ ab$. Hence we have $lcm(a,b/t)\ |\  lcm(a,b)\ |\ a\cdot (b/t)$. Thus $(a,b/t,lcm(a,b))\in \mathcal{C}'_K\backslash \{(1,1,1)\}$.\smallskip

\item Suppose $a,b>1$, then for any prime $p$, we have $p|a$ if and only if $p|b$.\smallskip

 From part (1), $c=lcm(a,b)$. It is enough to show that for any prime $p$, we have $p|a$ implies $p|b$. Assume on the contrary that there is a prime such that $p|a$ but $p\nmid b$. Observe that $(a,b,c)=((a/p)\cdot p,b\cdot 1,(lcm(a,b)/p)\cdot p)$. Clearly $(p,1,p)\in \mathcal{C}'_K\backslash \{(1,1,1)\}$. Since $p|a$ but $p\nmid b$, we have $lcm(a/p, b)=lcm(a,b)/p$. Also $b>1$. Thus $(a/p, b, lcm(a,b)/p)\in \mathcal{C}'_K\backslash \{(1,1,1)\}$.\smallskip

 \item Suppose $a,b>1$, then $a$ is a prime power.\smallskip
 
 Suppose $a$ has more than one prime in its factorization. Let $a=p^{v_p(a)}a'$ where $v_p(a)\geq 1$ and $a'>1$. From part (2) we have $b=p^{v_p(b)}b'$ where $v_p(b)\geq 1$ and $b'>1$. Now $lcm(a,b)=p^{max\{v_p(a),v_p(b)\}}lcm(a',b')$. Thus $(a,b,c)=(p^{v_p(a)}\cdot a', p^{v_p(b)}\cdot b', p^{max\{v_p(a),v_p(b)\}}\cdot lcm(a',b'))$. Clearly $(p^{v_p(a)}, p^{v_p(b)}, p^{max\{v_p(a),v_p(b)\}}),(a',b',lcm(a',b'))\in \mathcal{C}'_K\backslash \{(1,1,1)\}$.\smallskip

 \item  Suppose $a,b>1$, then $(a,b,c)=(p,p,p)$ for some prime $p$.\smallskip
 
 By parts (2) and (3) we have $(a,b,c)=(p^i,p^j,p^{max\{i,j\}})$. Without loss of generality let $i\leq j$. Then $(a,b,c)=(p^i\cdot 1,p^i\cdot p^{j-i},p^i\cdot p^{j-i})$. Thus $i=0$ or $j=i$. If $i=0$ then by part (2) we have $j=0$ which is a contradiction. If $j=i$ then $(a,b,c)=(p^i,p^i,p^i)$. We must have $i=1$ because if $i>1$ then $(a,b,c)=(p\cdot p^{i-1},p\cdot p^{i-1},p\cdot p^{i-1})$ which is a contradiction.\smallskip

 \item Finally if $a=1$ or $b=1$ then $(a,b,c)=(1,1,1)$ or $(1,p,p)$ or $(p,1,p)$ for some prime $p$.\smallskip

 It is enough to show for $a=1$ and $b>1$. If $b$ is not prime then $b=de$ where $d,e>1$. Then $(a,b,c)=(1\cdot 1,d\cdot e,d\cdot e)$ which is a contradiction.\end{enumerate}\end{proof}

\begin{remark}\hfill

\begin{enumerate}
\item The above proof also demonstrates that whenever $(a,b,c)\in \mathcal{C}'_K$ is reducible inside $\mathcal{C}_K$, it is also reducible inside $\mathcal{C}'_K$ (product of two triplets in $\mathcal{C}'_K$).

\smallskip

\item If we do not have the condition that $t\ |\ gcd(a,b)$, then it is not necessary that $(a,b,c)$ being irreducible implies $t=1$. For instance the triplet $(n,n,n(n-1))$ for $n\geq 3$ which is compositum feasible over $\mathbb{Q}$ is irreducible as shown in Proposition A1 in \cite{maciulevivcius2023degree}.

   \end{enumerate}
\end{remark}

\begin{proposition}\label{xn comp feas}
Let $K$ be a number field. Then

\begin{enumerate}
    \item For any $n\in \mathbb{N}$, $(n,n,n\phi(n))\in \mathcal{C}_K$ where $\phi$ is the Euler totient function. Furthermore we can choose the field extensions such that compositum has Galois closure over $K$ having a solvable Galois group. 
    
    \item  $(n,n,n\phi(n))\in \mathcal{C}'_K$ if and only if $n=1$ or $n=2^l 3^m$ where $l\geq 1$ and $m\geq 0$. 

    \item $(n,n,n\phi(n))$ is irreducible if and only if $n=1$ or $n$ is a prime.

\end{enumerate}
\end{proposition}

\begin{proof}\hfill

\begin{enumerate}
    \item First let $K=\mathbb{Q}$. Consider the case in Example \ref{unique F example} (1). Let $L=\mathbb{Q}(a)$ and $L'=\mathbb{Q}(a\zeta)$. Thus $LL'=\mathbb{Q}(a,\zeta)=\tilde{L}$. Since the Galois group of Galois closure over $\mathbb{Q}$ is solvable, we are done for any number field $K$ by Lemma \ref{lin disj} (2).

\smallskip
    \item Now $(n,n,n\phi(n))\in \mathcal{C}'_K$ if and only if $\phi(n)\ |\ n$ that is $\Pi_{p|n}(p-1)\ |\ \Pi_{p|n}(p)$. It is easy to see that this holds if and only if $n=1$ or $n=2^l 3^m$ for $l\geq 1$ and $m\geq 0$.\smallskip

    \item Suppose the triplet is irreducible and $n\neq 1$. Let $p|n$ and $n=p^k m$ where $p$ is a prime and $p\nmid m$. Therefore $(n,n,n\phi(n))=(p^k m,p^k m, p^k m\ \phi(p^k m))=(p^k\cdot m, p^k\cdot m, p^k\phi(p^k)\cdot m\phi(m))$. Thus we must have $m=1$. Now $n=p^k$. Thus $(n,n,n\phi(n))=(p^k,p^k,p^k(p^k-p^{k-1}))=(p\cdot p^{k-1}, p\cdot p^{k-1}, p^2\cdot p^{k-1}(p^{k-1}-p^{k-2}))$. Hence we must have $k=1$. The converse is clear.
\end{enumerate}
    
\end{proof}

\begin{remark}
    In particular Proposition \ref{xn comp feas} says that for $p\geq 3$ prime we have $(p,p,p(p-1))\in \mathcal{C}_K$ and we can choose the field extensions such that compositum has Galois closure over $K$ having a solvable Galois group. Also by Proposition \ref{Sn comp feas} we have that for the same triplet $(p,p,p(p-1))\in \mathcal{C}_K$ we can choose the field extensions such that compositum has Galois closure over $K$ having Galois group $\mathfrak{S}_p$ which is non-solvable for $p\geq 5$. 
\end{remark}






\smallskip

\noindent {\it Acknowledgements:}  
The author would like to thank Prof Purusottam Rath, CMI Chennai for suggesting to consider the question of isomorphism classes of extensions with given degree and cluster size which is dealt with in Section \ref{Inverse Root Capacity Problem}. The author is grateful to Prof P Vanchinathan and Dr Anand Chitrao for suggesting to generalize one of the results in \cite{jaiswal2025minimal} for number fields which is achieved in a result in Section \ref{Minimal Generating Sets of Galois Closure}. The author would also like to thank Dr Shripad Garge, IIT Bombay for introducing the paper by Drungilas et al. \cite{drungilas2012degree} and suggesting to investigate the problem of compositum feasible triplets and associated Galois groups which is dealt with in Section \ref{Compositum Feasible Triplets}. Finally the author would like to acknowledge support of IIT Bombay Institute Post Doctoral Fellowship during the later part of this work.  
\medskip

\textbf{Data Availability Statement}:
No new data were created or analyzed in this study.

\medskip

\bibliographystyle{plain}
 \bibliography{mybib}

\begin{thebibliography}{10}

\bibitem{Bhagwat_2025}
Chandrasheel Bhagwat and Shubham Jaiswal.
\newblock Cluster magnification, root capacity, unique chains, base change and ascending index.
\newblock {\em Proc. Indian Acad. Sci. (Math. Sci.) 135:19}, 2025.

\bibitem{bhagwat2024right}
Chandrasheel Bhagwat and Shubham Jaiswal.
\newblock {Right splitting, Galois correspondence, Galois representations and inverse Galois problem}.
\newblock {\em J. Ramanujan Math. Soc. 40, No.3 229–244.}, 2025.

\bibitem{conrad2023galois}
Keith Conrad.
\newblock The {Galois} correspondence at work.
\newblock {\em \\ https://kconrad.math.uconn.edu/blurbs/galoistheory/galoiscorrthms.pdf}, 2023.

\bibitem{drungilas2016degrees}
Paulius Drungilas and Art{\=u}ras Dubickas.
\newblock On degrees of three algebraic numbers with zero sum or unit product.
\newblock In {\em Colloquium Mathematicae}, volume 143, pages 159--167, 2016.

\bibitem{drungilas2013degree}
Paulius Drungilas, Art{\=u}ras Dubickas, and Florian Luca.
\newblock On the degree of compositum of two number fields.
\newblock {\em Mathematische Nachrichten}, 286(2-3):171--180, 2013.

\bibitem{drungilas2012degree}
Paulius Drungilas, Art{\=u}ras Dubickas, and Chris Smyth.
\newblock A degree problem for two algebraic numbers and their sum.
\newblock {\em Publ. Mat. 56 (2012), 413–448}, 2012.

\bibitem{drungilas2019degree}
Paulius Drungilas and Lukas Maciulevi{\v{c}}ius.
\newblock A degree problem for the compositum of two number fields.
\newblock {\em Lithuanian Mathematical Journal}, 59(1):39--47, 2019.

\bibitem{jacobson1990galois}
Eliot~T Jacobson and William~Y V{\'e}lez.
\newblock The {Galois} group of a radical extension of the rationals.
\newblock {\em Manuscripta Mathematica}, 67:271--284, 1990.

\bibitem{ShubhamJaiswalPhDThesis}
Shubham Jaiswal.
\newblock {Inverse Galois Problem \& Root Clusters.}
\newblock {\em Thesis PhD Mathematics, IISER Pune. https://sites.google.com/view/shubhamjaiswal14/research}, 2025.

\bibitem{jaiswal2025minimal}
Shubham Jaiswal and P~Vanchinathan.
\newblock {On Minimal generating sets of splitting field, Cluster towers and Multiple transitivity of Galois groups}.
\newblock {\em Accepted for publication in Czechoslovak Mathematical Journal. https://arxiv.org/abs/2505.00672}, 2025.

\bibitem{krithika2023root}
M~Krithika and P~Vanchinathan.
\newblock An elementary problem in {Galois} theory about the roots of irreducible polynomials.
\newblock {\em Proc. Indian Acad. Sci. (Math. Sci.) (2024) 134:28}, 2024.

\bibitem{krithika2025inflated}
M~Krithika and P~Vanchinathan.
\newblock {Inflated G-Extensions for Algebraic Number Fields.}
\newblock {\em Accepted for publication in Czechoslovak Mathematical Journal, https://arxiv.org/abs/2503.23946}, 2025.

\bibitem{maciulevivcius2023degree}
Lukas Maciulevi{\v{c}}ius.
\newblock On the degree of product of two algebraic numbers.
\newblock {\em Mathematics}, 11(9):2131, 2023.

\bibitem{perlisroots}
Alexander~R Perlis.
\newblock Roots appear in quanta: exercise solutions.
\newblock {\em \\ https://www.math.lsu.edu/~aperlis/publications/rootsinquanta/}, 2003.

\bibitem{perlis2004roots}
Alexander~R Perlis.
\newblock Roots appear in quanta.
\newblock {\em The American Mathematical Monthly}, 111(1):61--63, 2004.

\bibitem{Shafarevich1989}
Igor Shafarevich.
\newblock Factors of decreasing central series.
\newblock {\em Mat. Zametki}, 45, no.3, 128, 1989.

\bibitem{virbalas2023compositum}
Paulius Virbalas.
\newblock Compositum of two number fields of prime degree.
\newblock {\em New York journal of mathematics.}, 29:171--192, 2023.

\bibitem{volklein1996groups}
Helmut V{\"o}lklein.
\newblock {\em Groups as {Galois} groups: an introduction}.
\newblock Number~53. Cambridge University Press, 1996.

\end{thebibliography}

 \bigskip

\end{document}